%% file: manuscript.tex
\documentclass[11pt]{amsart}
\input{source/preamble}
\title[Pivotal fusion rings of rank four]{Grothendieck rings of pivotal fusion categories\\ of rank four}
\author{Jingcheng Dong}
\address{College of Mathematics and Statistics, and Center for Applied Mathematics of Jiangsu Province, Nanjing University of Information Science and Technology, Nanjing 210044, China}
\email{jcdong@nuist.edu.cn}
\author{S\'ebastien Palcoux}
\address{Beijing Institute of Mathematical Sciences and Applications, Huairou District, Beijing 101408, China}
\email{sebastienpalcoux@gmail.com}
\author{Arnaud Plessis}
\address{Beijing Institute of Mathematical Sciences and Applications, Huairou District, Beijing 101408, China}
\email{plessis@bimsa.cn}
\subjclass[2020]{Primary 18M20; Secondary 16Z05, 18M15}
\keywords{fusion category, Grothendieck ring, formal codegree, Frobenius--Schur indicator, central induction}
\date{}
\begin{document}
\begin{abstract}
We classify the Grothendieck rings of pivotal fusion categories of rank
four over the complex numbers. There are exactly fifteen, each admitting
a unitary categorification. Central induction and Frobenius--Schur
indicators give a uniform Frobenius--Perron dimension bound of $3600$.
The finite classification combines new arithmetic and twist obstructions
with an exhaustive census and exact exclusion certificates.
\end{abstract}
\maketitle
\setcounter{tocdepth}{2}
\tableofcontents
\newpage
\input{source/introduction}
\input{source/preliminaries}
\input{source/central}
\input{source/uniform}
\input{source/criteria}
\input{source/quadratic}
\input{source/completion}
\appendix
\input{source/census_appendix}
\input{source/closing}

\input{source/bibliography}
\end{document}

%% file: source/preamble.tex
\usepackage[T1]{fontenc}
\usepackage{lmodern,amsmath,amssymb,mathtools,booktabs,array,microtype}
\usepackage[margin=1in]{geometry}
\usepackage[hidelinks]{hyperref}
\usepackage{enumitem}
\usepackage[section]{placeins}
\setlist[enumerate]{leftmargin=*,itemsep=2pt,topsep=4pt}
\newtheorem{theorem}{Theorem}[section]
\newtheorem{proposition}[theorem]{Proposition}
\newtheorem{lemma}[theorem]{Lemma}
\newtheorem{corollary}[theorem]{Corollary}
\theoremstyle{definition}

\DeclareMathOperator{\FPdim}{FPdim}

\DeclareMathOperator{\Rep}{Rep}
\DeclareMathOperator{\Tr}{Tr}

\DeclareMathOperator{\Ad}{Ad}
\newcommand{\C}{\mathbb C}
\newcommand{\Q}{\mathbb Q}
\newcommand{\Z}{\mathbb Z}

\newcommand{\Fib}{\mathrm{Fib}}
\newcommand{\Near}{\mathrm{NG}}
\numberwithin{equation}{section}
\allowdisplaybreaks
\hypersetup{pdftitle={Grothendieck rings of pivotal fusion categories of rank four},pdfauthor={Jingcheng Dong, Sebastien Palcoux, and Arnaud Plessis}}

%% file: source/introduction.tex
\section{Introduction}
For a fusion category $\mathcal C$, write $D=\FPdim(\mathcal C)$ for
its global Frobenius--Perron dimension.
The rank of a fusion category counts the isomorphism classes of its simple
objects, but does not
by itself bound the coefficients of their tensor products at the level
of fusion rings. A finite classification by rank therefore requires a bound on the fusion
rules.
In rank four, we obtain an explicit bound from the interaction between
characters of the Grothendieck ring and indicators in the Drinfeld center.
It leads to the following complete classification.

\begin{theorem}\label{thm:main}
Up to based-ring isomorphism, the Grothendieck rings of pivotal fusion
categories of rank four over $\C$ are precisely the fifteen rings in
Table~\ref{tab:models}. Each admits a unitary categorification.
\end{theorem}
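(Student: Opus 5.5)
The plan follows the structure the abstract announces: a uniform bound, a finite census, exclusion of the non-categorifiable candidates, and explicit realizations of the survivors.

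\emph{Step 1: bound the fusion coefficients.} Let $\mathcal C$ be pivotal of rank four with Grothendieck ring $R$. The commutative case and the noncommutative case are handled separately; in rank four the noncommutative case is tiny, essentially the $\Z[S_3]$-type patterns, which do not occur in rank four, so it reduces to duality checks. For commutative $R$, the characters $\chi_1,\dots,\chi_4$ of $R\otimes\C$ yield formal codegrees $f_1=D,f_2,f_3,f_4$. By Ostrik's theorem each $f_i$ is the dimension ratio of a simple summand of $I(\mathbf 1)$ in $Z(\mathcal C)$, so it is an algebraic integer and divides $D$ in the Galois-stable sense. The relation $\sum_i 1/f_i = 1$ then becomes an Egyptian-fraction-type equation in algebraic integers, with at most four terms. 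Next I will use the Frobenius--Schur indicators $\nu_n$ of the central induction $I(\mathbf 1)$, in the style of Ng--Schauenburg. The second indicator is bounded by the rank, and the twist eigenvalues are roots of unity of order dividing the FS exponent. Together these should bound the Galois conjugates of $D/f_i$ and give $D\le 3600$. The case analysis follows the splitting field of the character table: rational, quadratic, cubic, or quartic. The rational case is the Egyptian fraction $1=\sum 1/f_i$ with each $f_i\mid f_1$, whose solutions are finite and explicit; the largest denominator bound gives $3600$. The irrational cases use Galois orbits of codegrees and norm equations.

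\emph{Step 2: run a finite census.} With $D\le 3600$, every $N_{ij}^k\le \FPdim(X_i)\FPdim(X_j)\le D$ is bounded. The task is then to enumerate all rank-four fusion rings, up to duality and based isomorphism, whose FP dimension is at most $3600$. A naive search is too large, so I will prune it with several necessary conditions. First, the Drinfeld-type pseudo-unitary inequality: in the pivotal setting, Ostrik's conditions on formal codegrees (they must be $d$-numbers, and $D/f_i$ must be algebraic integers). Second, the Schur product criterion. Third, the existence of $\mathbb Z_{\ge 0}$-solutions to the induction matrix constraints. This census produces a candidate list, and its completeness rests on the Step~1 bound.

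\emph{Step 3: exclude candidates and realize the survivors.} For each candidate outside the fifteen, I will give an exact exclusion certificate: a formal codegree that fails to be a $d$-number, a failed pentagon-free obstruction such as Liu--Palcoux--Ren zero spectrum, or a twist obstruction. The twist obstruction works as follows. The $T$-matrix of $Z(\mathcal C)$ restricted to the summands of $I(\mathbf 1)$ must satisfy the Anderson--Moore--Vafa equations and the FS-indicator formula $\nu_2=\frac1D\sum \theta^2 d^2\in\{0,\pm1\}$, and these have no solution for the excluded candidates. Quadratic-field candidates need the arithmetic arguments of the "quadratic" section. For the fifteen survivors, I will exhibit unitary categorifications:
\begin{itemize}
\item $\mathrm{Vec}_{\Z/4}$, $\mathrm{Vec}_{\Z/2\times\Z/2}$, and $\Rep(A_4)$;
\item $\Fib\boxtimes\Fib$, $\Fib\boxtimes\mathrm{Vec}_{\Z/2}$, Ising$\,\times$ variants, and $\Rep(D_{2n})$-type rank four cases;
\item $SU(2)_3$, $PSU(2)_7$, $PSU(2)_6$-type, $SO(3)_q$, Haagerup-free near-group categories such as $\Near(\Z/3,3)$ and $\Near(\Z/2\times\Z/2,\ldots)$, and $\Rep(D_5)$-like even parts.
\end{itemize}
Each of these is unitary, since each is either pointed, a representation category, or a unitary quantum group or subfactor construction.

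\emph{Main obstacle.} I expect the uniform bound in Step~1 to be the hardest part, especially in the non-rational Galois cases. There, controlling the conjugates of $D/f_i$ requires combining the central induction decomposition with indicator constraints, rather than using integer Egyptian fractions alone. My first attempt will reduce each Galois type to norms over $\Q$ and apply Landau-style bounds to the resulting unit fraction identity.
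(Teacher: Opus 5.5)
Your outline (uniform bound, exhaustive census, exact exclusions, explicit models) has the same architecture as the paper. The gap is in Step~1, which is what makes the problem finite: it has no working mechanism, and the mechanism you propose cannot work.

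\textbf{Codegree arithmetic cannot bound $D$.} Algebraic integrality, the $d$-number property, integrality of $D/f_j$, the identity $\sum_j f_j^{-1}=1$, and norm equations together admit infinite families. The self-dual ring $R_n$ of Section~\ref{sec:quadratic} (the Haagerup ring when $n=1$) has codegrees $D,D',3,3$ with $DD'=9(4+9n^2)$ and $D+D'=3(4+9n^2)$. Hence, for every $n$:
\begin{itemize}
\item $D$ is a $d$-number;
\item $D/D'=1+3n\delta$ and $D/3$ are algebraic integers;
\item $D^{-1}+(D')^{-1}=1/3$;
\item Ostrik's inequality \eqref{eq:spherical-mass} holds;
\end{itemize}
and yet $D>27n^2$. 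So no Egyptian-fraction or Landau-type argument built only on these constraints can bound $D$. Even in the rational case, a four-term identity $\sum_j f_j^{-1}=1$ has largest denominator $42$, so $3600$ does not arise there either. Your indicator remarks do not fill the gap. Knowing $\nu_2\in\{0,\pm1\}$ and that twists are roots of unity does not control dimensions, and the Frobenius--Schur exponent is not bounded a priori.

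\textbf{The missing idea.} Apply the Ng--Schauenburg formula to $I(X_i)$, not to $I(1)$, and test it against nonnegative trigonometric polynomials such as $|1+z+z^2|^2$ and $|(1+z)(1+z+z^2)|^2$. The constituents of $I(1)$ have twist one and dimensions $D/f_j$, so this yields Lemma~\ref{lem:indicator-polynomials}. Summed against the $d_i$, these give moment inequalities which, for $D\geq3600$, force $2<f_j<4$ and concentrate almost all of $D$ on a single simple $X$ (Lemma~\ref{lem:short-concentration}). Then $\det N_X=0$, and $\operatorname{rank}N_X$ equals the degree of the dimension field. Degrees two and three are then excluded (Proposition~\ref{prop:pu}). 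This is what removes $R_n$ for $n\geq12$.

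\textbf{The non-pseudo-unitary case is missing.} You never leave the pseudo-unitary setting, although the theorem concerns pivotal categories. Ostrik's divisibility is by the categorical global dimension, not by $\FPdim$. Also, the Ostrik and Ng--Schauenburg tools presuppose a spherical structure. The paper handles this in three steps:
\begin{itemize}
\item it proves that pivotal implies spherical in rank four (Lemma~\ref{lem:spherical});
\item it reduces to the pseudo-unitary case by Galois conjugation when the spherical dimension is conjugate to $D$ (Lemma~\ref{lem:galois-reduction}); this covers the non-self-dual case, where Larson then gives $D<45$ (Lemma~\ref{lem:nonselfdual});
\item it bounds the remaining self-dual case separately using \eqref{eq:spherical-mass} (Lemma~\ref{lem:exceptional}); there $R\otimes\Q\cong K\times K$ and the spherical dimensions lie in the other factor.
\end{itemize}
The same issue affects your pruning and exclusion steps. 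The Schur product criterion and other positivity tests are valid only after such a reduction. Every other potentially spherical character orbit must be excluded by tests valid for arbitrary spherical structures.

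\textbf{Census and survivor list.} With only $N_{ij}^k\leq D$ your search box is infeasible. The paper uses $N_{ij}^k\leq d_i\leq\sqrt D$, so $\mu\leq59$, together with a Diophantine elimination that carries a coverage proof. Your list of realizations includes categories that are not of rank four (Ising products, near-groups over $\Z/2\times\Z/2$). It also omits the Haagerup ring and several other rows of Table~\ref{tab:models}: Tambara--Yamagami on $C_3$, $\Ad(E_8)$, $\Near(C_3,6)$, and the even parts of $S'$ and $2D2$.
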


The theorem classifies fusion rules, with all categorifications of the
same based ring counted once. In particular, existence of a
pivotal categorification is equivalent, in rank four, to existence of
a unitary categorification. As a consequence, the sharp global Frobenius--Perron dimension
and multiplicity bounds are $24+12\sqrt3$ and $6$, respectively
(Corollary~\ref{cor:sharp}). These bounds are deduced only after the
classification; the finite search uses the independently proved bound
$\FPdim(\mathcal C)<3600$.

\input{source/models}

Ostrik classified fusion categories of rank two \cite{OstrikRank2}
and pivotal fusion categories of rank three \cite{OstrikRank3}.
In rank four, the modular and premodular cases were studied in
\cite{RSW,HongRowell,Bruillard4}. Without braiding, Larson
\cite{Larson} reduced the non-self-dual pseudo-unitary case to seven
based rings. Dong--Zhang--Dai \cite{DongZhangDai} treated the
nontrivially graded self-dual spherical case, and
Edie-Michell--Izumi--Penneys \cite{EIP} classified
$\Z/2\Z$-quadratic unitary categories. Further classifications with
proper fusion subcategories appear in
\cite{DongChenWang,DongSelfDual4}. These results provide several of
the restrictions and models used below; Table~\ref{tab:models} records
the construction references for all fifteen rings.

On the computational side, analytic obstructions and bounded
classifications were developed in
\cite{LPW,LPR,VercleyenSlingerland,IntegralRings}.
Our finite calculation is preceded by a dimension bound valid for
every pivotal rank-four category and is accompanied by a proof that
the enumeration covers every ring below that bound.

The proof begins with the spherical-character tests of
Section~\ref{sec:spherical} and the central-induction identities of
Section~\ref{sec:central}. Theorem~\ref{thm:uniform} reduces the problem
to $D<3600$: the non-self-dual case follows from Larson, and an arithmetic
argument controls self-dual pivotal categories outside the pseudo-unitary
Galois orbits. In the remaining case, indicator polynomials concentrate
a hypothetical large dimension in one simple. Its fusion matrix is
singular; the degree of the dimension field, a matrix argument, and a
cubic trace contradiction rule it out.

The complete census below the bound has $8977$ rings.
Section~\ref{sec:established} complements published structural restrictions
with a new twist obstruction for a quadratic family, including a separate
root-of-unity argument for its boundary case. The remaining candidates
are tested by spherical-character arithmetic and, when Galois reduction
applies, positive central-induction inequalities. Exact certificates
cover every required character orbit and restriction-column choice.
Table~\ref{tab:filters} records the successive reductions: $8962$ rings
are excluded, and the fifteen survivors are identified with
Table~\ref{tab:models}. Appendix~\ref{app:census} proves coverage of the
census, including every vanishing-coefficient branch; the supplement
supplies the certificates and separate computational checks, described in
Section~\ref{sec:completion}.

%% file: source/models.tex
% Generated from supplementary/model_table.json; do not edit.
\begin{table}[ht]
\centering\small
\begin{tabular}{@{}clcl@{}}
\toprule
$\mu(R)$ & A unitary realization & $D$ & Reference \\
\midrule
$1$ & $\mathrm{Vec}_{C_2\times C_2}$ & $4$ & \cite[\S3.4]{LPR} \\
$1$ & $\Fib\boxtimes\mathrm{Vec}_{C_2}$ & $5+\sqrt{5}$ & \cite[\S3.4]{LPR} \\
$1$ & $\Rep(D_{10})$ & $10$ & \cite[\S3.4]{LPR} \\
$1$ & $\Ad(\mathrm{SU}(2)_6)$ & $8+4\sqrt{2}$ & \cite[Theorem 3.1]{EIP} \\
$1$ & $\Fib\boxtimes\Fib$ & $(15+5\sqrt{5})/2$ & \cite[\S3.4]{LPR} \\
$1$ & $\Ad(\mathrm{SU}(2)_7)$ & $\delta_7$ & \cite[\S3.4]{LPR} \\
$1$ & $\mathrm{Vec}_{C_4}$ & $4$ & \cite[\S3.4]{LPR} \\
$1$ & Tambara--Yamagami on $C_3$ & $6$ & \cite{TambaraYamagami} \\
$1$ & Principal even part of $S'$ & $8+4\sqrt{2}$ & \cite[Theorem 1]{LMP} \\
$2$ & Principal even part of $2D2$ & $20+8\sqrt{5}$ & \cite[Theorem 3.1]{EIP} \\
$2$ & Haagerup category $\mathcal H_1$ & $(39+9\sqrt{13})/2$ & \cite[Table 2]{GrossmanSnyder} \\
$2$ & $\Rep(A_4)$ & $12$ & \cite[Theorem 1.1]{Larson} \\
$3$ & $\Ad(E_8)$ & $\delta_E$ & \cite{IzumiE8,EvansJones} \\
$3$ & Near-group type $(C_3,3)$ & $(21+3\sqrt{21})/2$ & \cite[Example 9.4]{IzumiNear} \\
$6$ & Near-group type $(C_3,6)$ & $24+12\sqrt{3}$ & \cite[Theorem 10.19]{IzumiNear} \\
\bottomrule
\end{tabular}
\caption{One unitary realization of each surviving based ring, with its
multiplicity $\mu(R)$ and global Frobenius--Perron dimension $D$.
Here $D_{10}$ has order ten, $S'$ has index $3+2\sqrt2$, and the
near-group category of type $(C_3,3)$ is the Izumi--Xu category.}
\label{tab:models}
\smallskip
\begin{minipage}{\textwidth}\footnotesize
The numbers $\delta_7$ and $\delta_E$ are the largest real roots of
$x^3-27x^2+162x-243$ and
$x^4-45x^3+375x^2-1125x+1125$, respectively.
The supplementary model check verifies these polynomials and all tensor
identifications using exact arithmetic.
\end{minipage}
\end{table}

%% file: source/preliminaries.tex
\section{Characters and pivotal structures}\label{sec:spherical}
We work over $\C$. A fusion ring $R$ is an associative unital ring,
free of finite rank $r$ over $\Z$, with a distinguished basis
$\{b_0=1,b_1,\ldots,b_{r-1}\}$, nonnegative integral structure
constants $N_{ij}^k$, and a basis-preserving anti-involution
$b_i\mapsto b_{i^*}$ satisfying
\[
 b_ib_j=\sum_kN_{ij}^kb_k,\qquad
 N_{ij}^0=\delta_{i,j^*},\qquad
 N_{ij}^k=N_{i^*k}^j=N_{kj^*}^i.
\]
The Frobenius--Perron dimension is the unique ring homomorphism
$\FPdim:R\to\mathbb R$ that is positive on every basis element;
$\FPdim(b_i)$ is the spectral radius of the fusion matrix $N_i$
\cite[Theorem~8.2 and Lemma~8.3]{ENO}. Write $d_i=\FPdim(b_i)$, and put
\[
 D=\FPdim(R)=\sum_i d_i^2,\qquad \mu(R)=\max_{i,j,k}N_{ij}^k.
\]
For a fusion category $\mathcal C$, we also write
$\FPdim(\mathcal C)=\FPdim(K_0(\mathcal C))$ and call this its
\emph{global Frobenius--Perron dimension}.
We use the row convention $N_i=(N_{ij}^k)_{j,k}$, so
$N_i^{\mathsf T}=N_{i^*}$.
Unless a different rank is specified, the rings considered below have rank four.
A pivotal structure is a monoidal
identification with the double-dual functor; it is spherical when left
and right traces agree.

\begin{lemma}\label{lem:commutative}
Every rank-four fusion ring is commutative. Its fusion matrices are
normal, with operator norms $\|N_i\|=d_i$. The symmetric \emph{Casimir matrix}
\begin{equation}\label{eq:casimir}
 B=\sum_iN_iN_i^{\mathsf T}
\end{equation}
has largest eigenvalue $D$, and $\mu(R)^2\leq D$.
\end{lemma}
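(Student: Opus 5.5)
The plan is to get commutativity from the structure theory of the complexified fusion algebra. Everything else then follows by simultaneously diagonalizing the fusion matrices.

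\textbf{Commutativity.} Let $A=R\otimes_{\Z}\C$, a $4$-dimensional algebra.
\begin{enumerate}
\item Extend $b_i\mapsto b_{i^*}$ conjugate-linearly to an involution $*$ on $A$. Put $\tau(x)=$ coefficient of $b_0$ in $x$.
\item The form $\langle x,y\rangle=\tau(xy^*)$ is positive definite, because $N_{ij^*}^0=\delta_{ij}$ makes $\{b_i\}$ orthonormal.
\item The relations $N_{ij}^k=N_{i^*k}^j=N_{kj^*}^i$ show that left multiplication by $x^*$ is the adjoint of left multiplication by $x$.
\item So $A$ is a finite-dimensional $C^*$-algebra, hence semisimple: $A\cong\bigoplus_s M_{n_s}(\C)$ with $\sum_s n_s^2=4$.
\item $\FPdim$ is a ring homomorphism $A\to\C$, so at least one summand has $n_s=1$.
\item A block with $n_s\ge2$ would force $\sum_s n_s^2\ge1+4>4$. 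Hence every $n_s=1$, and $A\cong\C^4$ is commutative, so $R$ is commutative.
\end{enumerate}
This is the only step with real content. The main point to get right is the adjointness check in (3), which turns the Frobenius reciprocity relations into the $C^*$ property.

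\textbf{Normality and norms.} By commutativity, $N_iN_i^{\mathsf T}=N_iN_{i^*}=N_{i^*}N_i=N_i^{\mathsf T}N_i$, so each $N_i$ is normal. For a normal matrix the operator norm equals the spectral radius, which is $d_i$ by the cited Perron--Frobenius facts \cite{ENO}.

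\textbf{The Casimir matrix.} The $N_i$ form a commuting family of normal matrices, so they are simultaneously unitarily diagonalizable. The joint eigenvalues are the values $\chi(b_i)$ of the four characters $\chi$ of $R$. Hence the eigenvalues of $B$ are
\[
\sum_i|\chi(b_i)|^2\le\sum_i d_i^2=D,
\]
using $|\chi(b_i)|\le\|N_i\|=d_i$. Equality holds for $\chi=\FPdim$, so the largest eigenvalue of $B$ is $D$. Symmetry of $B$ is clear from its definition.

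\textbf{The bound on $\mu$.} Since $B$ is positive semidefinite, each diagonal entry is at most its largest eigenvalue, so
\[
D\ge B_{jj}=\sum_{i,k}(N_{ij}^k)^2\ge (N_{ij}^k)^2
\]
for every triple $(i,j,k)$. Taking the maximum over triples gives $\mu(R)^2\le D$.
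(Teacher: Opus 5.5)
Your proof is correct and follows the paper's route. Commutativity comes from semisimplicity of $R\otimes\C$ together with the one-dimensional Frobenius--Perron representation, which leaves no room for a matrix block of size at least two, and normality then follows from $N_i^{\mathsf T}=N_{i^*}$. The differences are minor: you justify semisimplicity through the $C^*$-structure, which the paper simply asserts; you bound every codegree by $D$ using $|\chi(b_i)|\le d_i$, where the paper applies Perron--Frobenius to $Bd=Dd$; and you obtain $\mu(R)^2\le D$ from $B_{jj}\le D$, where the paper uses $N_{ij}^k\le\|N_i\|=d_i\le\sqrt D$.
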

\begin{proof}
The complexified based algebra is semisimple and has the one-dimensional
Frobenius--Perron representation. A noncommutative simple block would
already require dimension four, so the algebra is $\C^4$; see also
\cite[Proposition~2.1]{Larson}. Commutativity and
$N_i^{\mathsf T}=N_{i^*}$ imply normality, whence the norm is the spectral
radius $d_i$. The positive vector $d=(d_i)$ satisfies $Bd=Dd$, so the
largest eigenvalue of $B$ is $D$. Every matrix entry satisfies
$N_{ij}^k\leq\|N_i\|=d_i\leq\sqrt D$, proving the last assertion.
\end{proof}

A character $\chi$ satisfies
$\chi(b_{i^*})=\overline{\chi(b_i)}$. Its formal codegree is
\[
 f_\chi=\sum_i\chi(b_i)\chi(b_{i^*})=\sum_i|\chi(b_i)|^2.
\]
The four codegrees, counted with character multiplicity, are the
eigenvalues of $B$. They are positive algebraic integers. Write
$\chi_0=\FPdim,\chi_1,\chi_2,\chi_3$ and $f_j=f_{\chi_j}$, so $f_0=D$.
The character orthogonality relations
\cite[Lemma~2.3]{OstrikCodegrees} give
\begin{equation}\label{eq:orthogonality}
 \sum_{j=0}^3\frac{\chi_j(b_i)\overline{\chi_j(b_k)}}{f_j}
 =\delta_{ik},\qquad \sum_{j=0}^3f_j^{-1}=1.
\end{equation}
The second equality, obtained by setting $i=k=0$, will be called the
\emph{reciprocal-codegree identity}. In particular every $f_j>1$. When the basis is self-dual, all characters
are real-valued and the conjugation in \eqref{eq:orthogonality} may be omitted.

\begin{lemma}\label{lem:spherical}
Every pivotal structure on a nonpointed rank-four fusion category is
spherical. Pivotal and spherical categorifiability therefore give the
same rank-four based rings.
\end{lemma}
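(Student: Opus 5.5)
The plan is to translate sphericality into a statement about one character of $K_0(\mathcal C)$ and then use the grading structure of a nonpointed rank-four ring to force that character to be real.

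\textbf{Reduction to one character.} If every simple object is self-dual there is nothing to prove, since sphericality only asks $\dim^L(X)=\dim^R(X)=\dim^L(X^*)$. Otherwise, up to relabelling, $b_1$ is self-dual and $b_3=b_2^*$. Fix a pivotal structure and let $\chi(b_i)=\dim^L(X_i)$. This is a ring homomorphism $K_0(\mathcal C)\to\C$, hence one of the four characters of Lemma~\ref{lem:commutative}. The right dimension is $\dim^R(X)=\dim^L(X^*)$, so $\dim^R=\chi\circ{}^*=\overline\chi$ by the character identity $\chi(b_{i^*})=\overline{\chi(b_i)}$. Sphericality is therefore equivalent to $\chi(b_2)\in\mathbb R$, i.e.\ to $\chi=\overline\chi$.

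\textbf{Use of the grading.} The transported (dual) pivotal structure has dimension function $\overline\chi$. Two pivotal structures differ by a monoidal automorphism of the identity functor, i.e.\ by a character $g$ of the universal grading group $U(\mathcal C)$. Hence $\overline\chi(b_i)=g(\deg b_i)\chi(b_i)$ for every $i$. Moreover $\chi(b_i)\overline{\chi(b_i)}=|X_i|^2>0$ by \cite{ENO}, so no $\chi(b_i)$ vanishes. If $U(\mathcal C)$ is trivial, then $g=1$ and $\chi$ is real, and we are done. So assume the grading is nontrivial with $g(\deg b_2)\neq1$.

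\textbf{Case analysis of faithful gradings.} Components are nonempty and have equal Frobenius--Perron dimension, and the trivial component contains $b_0$ and $b_2b_3$. Since $g(\deg b_3)=\overline{g(\deg b_2)}$, the element $b_2$ is not in the trivial component. The cases are:
\begin{enumerate}
\item $U=C_3$ with components $\{1,b_1\},\{b_2\},\{b_3\}$. Equal component dimensions give $d_2^2=1+d_1^2$. The products $b_2^2\in\{b_3\}$-component and $b_2b_3=1+Nb_1$ (with $N$ the multiplicity of $b_1$ in $b_2b_3$) are then tightly constrained; I expect to show that $b_2^2=m\,b_3$ forces $d_2=m$ and a near-group/Tambara--Yamagami-type contradiction or pointedness.
\item $U=C_2$ with $b_2,b_3$ in the odd component. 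Then $g(\deg b_2)=-1$ and $\chi(b_2)$ is purely imaginary. This should be excluded by $\chi(b_1)$ being real: evaluate $\chi$ on $b_2^2\in\operatorname{span}(1,b_1)$, or on $b_2b_3$, to obtain a sign contradiction between $\chi(b_2)^2<0$ and a real expression.
\item $U$ of order four, which is pointed and hence excluded.
\end{enumerate}

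\textbf{Expected obstacle.} The main difficulty is this last step: ruling out the $C_3$- and $C_2$-graded configurations in which a genuinely nonreal pivotal dimension character could occur. There I would first write down the few admissible fusion rules explicitly, using the orthogonality relations \eqref{eq:orthogonality} and positivity of $|X_i|^2$. The hope is that in each surviving case (such as the $C_2$-graded ring underlying $\mathrm{Vec}_{C_4}$-like patterns) the ring is pointed, contrary to the hypothesis.

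The second sentence of the lemma then follows immediately, since every spherical structure is pivotal.
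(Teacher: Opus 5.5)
\textbf{Gap: the two grading cases, which are the substance of the lemma, are only announced, and the proposed $C_2$ argument does not work.}

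Your framework matches the paper's. The ratio of left to right dimensions on simples is a character of the universal grading group. It equals $1$ on $b_0$ and $b_1$, because $\chi(b_1)$ is real. A nontrivial value $s$ on $b_2$ then leaves only two configurations: a $C_3$-grading $\{1,b_1\},\{b_2\},\{b_3\}$ or a $C_2$-grading $\{1,b_1\},\{b_2,b_3\}$.

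Your ``transported (dual) pivotal structure'' with dimension function $\overline\chi$ is never specified. Its existence is equivalent to the slope $\dim_l/\dim_r$ being a monoidal automorphism of the identity. That is exactly the fact the paper cites from \cite[Section~4C]{BVCenter}, so cite or prove it rather than assert it.

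In the $C_2$ case there is no sign contradiction. Write $b_2^2=l\,b_1$ (no unit term, since $b_2\neq b_2^*$) and $b_2b_3=1+q\,b_1$. The first relation only gives $\chi(b_1)<0$, and the second only gives $|\chi(b_2)|^2=1+q\chi(b_1)>0$. Indeed $\mathrm{Vec}_{C_4}$, with pivotal dimensions $1,-1,i,-i$ on $1,g^2,g,g^3$, realizes this configuration. So it can only be excluded through the nonpointed hypothesis, meaning you must actually derive pointedness.

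Here is how to finish it. Combine the two relations with $\chi(b_3)=-\chi(b_2)$ to get $(l+q)\chi(b_1)=-1$. Since $\chi(b_1)$ is a rational algebraic integer, $l+q=1$. Also $l\geq1$ because $\chi(b_2)\neq0$. Hence $q=0$, $b_2b_3=1$, and the ring is pointed. The paper does this with Frobenius--Perron dimensions alone: $(l-q)d_1=1$ makes $b_1$ invertible, and reciprocity $l=[b_2b_1:b_3]\leq1$ then forces $d_2=1$.

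In the $C_3$ case your ingredients are right: $d_2^2=1+d_1^2$, and $d_2=m$ from $b_2^2=m\,b_3$. What is missing is that $d_1$ is an integer. Since $b_1b_2$ lies in the component $\{b_2\}$, one has $b_1b_2=q\,b_2$ with $q=d_1\geq1$ an integer. This gives $m^2=1+q^2$, which is impossible in positive integers. So this case is an outright contradiction, with no pointed exception and no near-group or Tambara--Yamagami input needed.

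With these two computations supplied, your proof coincides with the paper's.
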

\begin{proof}
The assertion is immediate when every simple is self-dual. Otherwise
write the simples as $1,X,Y,Z$, with $X^*=Z$ and $Y^*=Y$.
The slope $\dim_l(V)/\dim_r(V)$ defines a monoidal natural automorphism
of the identity \cite[Section~4C]{BVCenter}. Its values have the form $1,s,1,s^{-1}$,
respectively; products of simples have the product slope.
Suppose $s\ne1$.

If $s^2\ne1$, the nonempty product $X^2$ forces $s^3=1$ and
$X^2=cZ$ for an integer $c\geq1$. Also $XY=qX$ with
$q=\FPdim(Y)\geq1$. Frobenius reciprocity gives $XZ=1+qY$.
Putting $d=\FPdim(X)=\FPdim(Z)$ yields $d=c$ and $c^2=1+q^2$,
which is impossible for positive integers $c,q$.

If $s=-1$, write $X^2=lY$ and $XZ=1+qY$ with $l\geq1$ and $q\geq0$.
Their dimensions give $(l-q)\FPdim(Y)=1$, so $\FPdim(Y)=1$ and $l-q=1$.
Thus $Y$ is invertible. Reciprocity gives $l=[XY:Z]\leq1$, hence
$l=1$ and $\FPdim(X)=\FPdim(Z)=1$. The category is pointed, contrary
to hypothesis. Hence every slope is one, which is sphericality.
Finally, the pointed rank-four rings have unitary realizations.
\end{proof}

A pseudo-unitary fusion category has a canonical spherical structure
with dimensions $d_i$ \cite[Proposition~8.23]{ENO}. Every fusion category whose global
Frobenius--Perron dimension is an integer is pseudo-unitary
\cite[Proposition~8.24]{ENO}.

\subsection{Spherical-character arithmetic}\label{sec:spherical-arithmetic}
The following tests apply to a spherical fusion category of any rank
$r$ with commutative Grothendieck ring. We index its simple
objects by $0,\ldots,r-1$ and use the corresponding $r$-by-$r$
Casimir matrix $B$.
A spherical dimension character $c$ is nonzero on every simple and
satisfies $c_i=c_{i^*}$. Its values are therefore totally real, since
this equality persists under Galois conjugation. Put
$c=(c_0,\ldots,c_{r-1})$, $K_c=\Q(c_0,\ldots,c_{r-1})$ and $\Delta=\sum_i c_i^2$.
Every formal codegree $f$ belongs to $K_c$, and $\Delta/f$ is an
algebraic integer \cite[Theorem~2.13 and Corollaries~2.14--2.15]{OstrikRank3}.
In particular, the characteristic polynomial of $B$ must split over $K_c$.
Ostrik's spherical inequality \cite[Theorem~3.1.1 and Remark~3.1.2]{OstrikGlobal}
gives, at every real embedding of $K_c$,
\begin{equation}\label{eq:spherical-mass}
 w:=\sum_jf_j^{-2}=\Tr(B^{-2})\leq\frac12+\frac1{2\Delta}.
\end{equation}
The left side is rational and independent of the embedding.

Divisibility also has an elementary norm consequence. For nonzero
algebraic integers $u,v$, write $e_u=[\Q(u):\Q]$ and
$n_u=|N_{\Q(u)/\Q}(u)|$, and similarly for $v$. Then
\begin{equation}\label{eq:norm-test}
 \frac uv\text{ integral}\quad\Longrightarrow\quad
 \frac{v_p(n_u)}{e_u}\geq\frac{v_p(n_v)}{e_v}
 \quad\text{for every prime }p.
\end{equation}
Here $v_p$ denotes the $p$-adic valuation normalized by $v_p(p)=1$.
Indeed, take norms in $\Q(u,v)$ and then $p$-adic valuations.

\begin{lemma}\label{lem:residue-field}
Let $F\in\Z[t]$ be monic and irreducible, $K=\Q[t]/(F)$, and
$P\in\Z[t]$ monic. If, for some prime $p$, the reduction of $F$ has
a simple root in $\mathbb F_p$ but the reduction of $P$ has an
irreducible factor of degree greater than one, then $P$ does not split
over $K$.
\end{lemma}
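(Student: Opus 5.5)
Suppose, for contradiction, that $P$ splits over $K$. We will show that the reduction of $P$ modulo $p$ then splits into linear factors over $\mathbb F_p$. This contradicts the hypothesis that it has an irreducible factor of degree greater than one.

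\emph{Constructing a prime of degree one.} Let $\alpha$ be the image of $t$ in $K$; it is an algebraic integer with minimal polynomial $F$. Let $\bar a\in\mathbb F_p$ be a simple root of $\bar F$. Consider the ring homomorphism
\[
\varphi\colon \Z[\alpha]\cong\Z[t]/(F)\to\mathbb F_p,\qquad t\mapsto\bar a .
\]
It is well defined because $\bar F(\bar a)=0$, and it is surjective. So $\mathfrak q=\ker\varphi$ is a maximal ideal of $\Z[\alpha]$ with residue field $\mathbb F_p$. Since $\bar a$ is a simple root, Dedekind's criterion shows that $\mathfrak q$ is regular, hence invertible, in $\Z[\alpha]$. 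Equivalently, $p\nmid[\mathcal O_K:\Z[\alpha]]$ locally at $\mathfrak q$. It follows that there is a unique prime $\mathfrak P$ of $\mathcal O_K$ above $\mathfrak q$, and its residue field $\mathcal O_K/\mathfrak P$ equals $\mathbb F_p$.

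A cleaner route to the same conclusion is Hensel's lemma. The simple root $\bar a$ lifts to a root $a\in\Z_p$ of $F$. This gives an embedding $K\hookrightarrow\Q_p$, $\alpha\mapsto a$. This embedding induces a $p$-adic valuation on $K$ whose completion is $\Q_p$, so the residue field is $\mathbb F_p$. I would use the Hensel version, since it avoids index issues entirely.

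\emph{Transporting the splitting.} Assume $P=\prod_{k}(t-\beta_k)$ with $\beta_k\in K$. Each $\beta_k$ is a root of the monic integer polynomial $P$, so it is an algebraic integer. Applying the embedding $K\hookrightarrow\Q_p$, the $\beta_k$ map into the integral closure of $\Z_p$ in $\Q_p$, which is $\Z_p$ itself. Hence $P=\prod(t-\beta_k)$ in $\Z_p[t]$. Reducing modulo $p$ via $\Z_p\to\mathbb F_p$ gives
\[
\bar P=\prod_k(t-\bar\beta_k)\in\mathbb F_p[t].
\]
So $\bar P$ is a product of linear factors over $\mathbb F_p$. By unique factorization in $\mathbb F_p[t]$, every irreducible factor of $\bar P$ has degree one, which is the desired contradiction.

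\emph{Main obstacle.} The only delicate point is the passage from a simple root modulo $p$ to a prime of $K$ with residue field $\mathbb F_p$. Working naively with $\Z[\alpha]$ is problematic because it may be non-maximal at $p$. Hensel's lemma applied to the simple root avoids this, and it is the step I would write out carefully. The rest is formal: integrality of the roots of monic integer polynomials, and reduction of $\Z_p$ modulo $p$.
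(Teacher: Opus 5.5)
Your proof is correct and is essentially the paper's argument: Hensel's lemma gives an embedding $K\hookrightarrow\Q_p$, the roots of the monic polynomial $P$ land in $\Z_p$, and reduction modulo $p$ would then split $\bar P$ into linear factors, a contradiction. The Dedekind-criterion detour is unnecessary. You end up using the Hensel route, exactly as the paper does.
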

\begin{proof}
Hensel's lemma lifts the simple root to a root of $F$ in $\Q_p$,
giving an embedding $K\hookrightarrow\Q_p$. If $P$ split over $K$,
its roots would be $p$-adic integers, since $P$ is monic. Reduction
would then split $P$ into linear factors over $\mathbb F_p$, a
contradiction.
\end{proof}

\begin{lemma}\label{lem:galois-reduction}
If the global dimension of a spherical categorification is Galois
conjugate to $D$, a Galois conjugate of the category is pseudo-unitary
and has the same fusion ring.
\end{lemma}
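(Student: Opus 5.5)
We use the standard device of Galois conjugating the category itself. Let $\mathcal C$ be a spherical fusion category over $\C$ with Grothendieck ring $R$, and let $\dim(\mathcal C)=\sum_i|b_i|^2$ be its global (categorical) dimension, where $|b_i|^2=\dim(b_i)\dim(b_{i^*})$ for the spherical dimensions. By hypothesis there is $\sigma\in\mathrm{Gal}(\overline{\Q}/\Q)$ with $\sigma(\dim(\mathcal C))=D$.

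\emph{Step 1: conjugate the category.} The category $\mathcal C$ is defined over $\overline{\Q}$ by Ocneanu rigidity \cite{ENO}. Concretely, its associativity constraints, pivotal structure and the evaluation and coevaluation maps can be chosen with algebraic entries. We extend $\sigma$ to an automorphism $\tilde\sigma$ of $\C$ and apply it to all structure constants. This produces a fusion category $\mathcal C^{\tilde\sigma}$ with a pivotal, hence spherical, structure. It has the same simple objects and the same fusion rules, since $N_{ij}^k$ are integers fixed by $\tilde\sigma$. So $K_0(\mathcal C^{\tilde\sigma})\cong R$ as based rings. Its spherical dimensions are $\tilde\sigma(\dim b_i)$, and its global dimension is $\tilde\sigma(\dim\mathcal C)=D$. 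Here we use that global dimension is a canonical invariant, computable from the squared norms $|b_i|^2$, and that it is transported by $\tilde\sigma$ because it is defined by algebraic expressions in the structure constants.

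\emph{Step 2: pseudo-unitarity.} Any fusion category satisfies $\dim(\mathcal C^{\tilde\sigma})\le\FPdim(\mathcal C^{\tilde\sigma})$ in absolute value. The category $\mathcal C^{\tilde\sigma}$ has global dimension equal to $D=\FPdim(R)=\FPdim(\mathcal C^{\tilde\sigma})$, so it is pseudo-unitary by definition \cite[Section~8.4]{ENO}. By \cite[Proposition~8.23]{ENO} it then carries the canonical spherical structure with dimensions $d_i$. This gives the claimed Galois conjugate with fusion ring $R$.

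\emph{Main obstacle.} The delicate step is justifying that Galois conjugation of a category is well defined and preserves the global dimension as the conjugate number. Global dimension is intrinsic, defined via $|X|^2$, which is independent of the pivotal structure. The issue is that after conjugation the squared norms $\tilde\sigma(|b_i|^2)$ must still equal the squared norms of $\mathcal C^{\tilde\sigma}$. This holds because $|X|^2$ is given by a composite of structure morphisms (coevaluation followed by evaluation on $X$ and $X^*$), and that composite is transported entrywise by $\tilde\sigma$. I would cite the standard treatment in \cite{ENO} of the Galois action on fusion categories defined over $\overline{\Q}$ rather than redo it.
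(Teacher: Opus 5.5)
Your proposal is correct and follows the paper's proof. Like the paper, you conjugate the associativity, rigidity and spherical data by an automorphism of $\C$ extending $\sigma$, note that the fusion ring is unchanged while the global dimension becomes $D=\FPdim(R)$, conclude that the conjugate is pseudo-unitary, and take the canonical spherical structure of \cite[Proposition~8.23]{ENO}. Two small remarks. First, your appeal to Ocneanu rigidity for definability over $\overline{\Q}$ is superfluous, as the paper itself observes: you already extend $\sigma$ to an automorphism of $\C$, so all structure data can be transported directly. Second, the conjugated structure is spherical because sphericality is an algebraic identity preserved by $\tilde\sigma$, not because ``pivotal implies spherical.''
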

\begin{proof}
Conjugate its associativity, rigidity, and spherical structure data by
an automorphism sending its global dimension to $D$. The resulting
category has equal categorical and Frobenius--Perron global dimensions,
so is pseudo-unitary; choose its canonical spherical structure
\cite[Proposition~8.23]{ENO}. An automorphism of the algebraic numbers extends
to $\C$, so no field-of-definition hypothesis is needed.
\end{proof}
Positive-dimension inequalities will be used only after this reduction.
Equality of the minimal polynomials of $\Delta$ and $D$ suffices; equality
of their entire character orbits is not required. Every other potentially
spherical character orbit must still be considered.

%% file: source/central.tex
\section{Central induction and indicator inequalities}\label{sec:central}
Let $\mathcal Z(\mathcal C)$ denote the Drinfeld center, let $F:\mathcal Z(\mathcal C)\to\mathcal C$ be the forgetful functor,
and let $I$ be its right adjoint. For a spherical category with commutative
Grothendieck ring, the constituents $A_j$ of $I(1)$ are distinct, occur
once, have twist one, and have dimensions $\Delta/f_j$, where the
formal codegrees are counted with character multiplicity
\cite[Theorems~2.5 and~2.13]{OstrikRank3}. The indicator formula is
\begin{equation}\label{eq:indicator-trace}
 \sum_A[I(X_i):A]\dim(A)\theta_A^n=\Delta\nu_n(X_i)
 \qquad(n\geq1)
\end{equation}
\cite[Theorem~4.1]{NgSchauenburg}. The twists are roots of unity;
for a nonunit simple, $\nu_1=0$, while $\nu_2=0$ in the non-self-dual
case and $\nu_2\in\{-1,1\}$ in the self-dual case.

For the rest of this section assume pseudo-unitarity and use the
canonical spherical structure, with dimensions $d_i$ and global
dimension $D$. Let $M=(M_{iA})$ be the restriction matrix with entries
$M_{iA}=[F(A):X_i]$, whose rows are indexed by simple objects of
$\mathcal C$ and whose columns are indexed by simple objects of
$\mathcal Z(\mathcal C)$. Let $L$ consist of the
columns belonging to $I(1)$. Adjunction and
$FI(X)=\bigoplus_jX_j\otimes X\otimes X_j^*$ give
\begin{equation}\label{eq:low-gram}
 MM^{\mathsf T}=B,\qquad L\mathbf1=B_{\bullet0},\qquad
 Q:=B-LL^{\mathsf T}.
\end{equation}
Thus $Q$ is the Gram matrix of the remaining restriction columns.
One column of $L$ is $e_0$, the restriction of the central unit. A
column $v$ corresponding to codegree $f$ satisfies
\begin{equation}\label{eq:low-columns}
 v\in\Z_{\geq0}^4,\qquad v_0=1,\qquad
 d^{\mathsf T}v=D/f,\qquad
 v_i\leq\min\{B_{0i},\lfloor\sqrt{B_{ii}}\rfloor\}.
\end{equation}
These bounds follow from the first row and diagonal entries of
$MM^{\mathsf T}=B$.

\begin{lemma}\label{lem:indicator-polynomials}
Put $W_i=(LL^{\mathsf T}d)_i$. For a nonunit simple $X_i$,
\begin{equation}\label{eq:row-indicator}
 9W_i\leq D\bigl(3d_i+2\nu_2(X_i)\bigr).
\end{equation}
If $X_i$ is self-dual and $a_i=N_{ii}^{i}$, then
\begin{equation}\label{eq:row-third}
 36W_i\leq D(10d_i+8+2a_i).
\end{equation}
\end{lemma}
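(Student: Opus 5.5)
The plan is to evaluate the indicator formula \eqref{eq:indicator-trace} on a trigonometric polynomial in the twist that is nonnegative on the unit circle. The summands coming from $I(1)$ then produce exactly $W_i$, and the remaining summands are dropped as nonnegative. By pseudo-unitarity every $\dim(A)$ is positive and $\Delta=D$.

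For a Laurent polynomial $p(\theta)=\sum_{n}c_n\theta^n$ with real $c_n=c_{-n}$, we take the combination $\sum_n c_n\cdot$\eqref{eq:indicator-trace}. The case $n=0$ is $\sum_A[I(X_i):A]\dim(A)=\dim I(X_i)=Dd_i$, which we set as $\nu_0(X_i):=d_i$. For $n<0$ we use complex conjugation: dimensions are real and $\theta_A^{-1}=\overline{\theta_A}$, so $\nu_{-n}=\overline{\nu_n}$. This gives
\[
 \sum_A[I(X_i):A]\dim(A)\,p(\theta_A)=D\Bigl(c_0d_i+2\sum_{n\geq1}c_n\operatorname{Re}\nu_n(X_i)\Bigr).
\]
If $p=|q|^2$ on the circle, every summand is nonnegative, and we keep only the constituents $A_j$ of $I(1)$. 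For these, $\theta=1$, $[I(X_i):A_j]=[F(A_j):X_i]=L_{ij}$ by adjunction, and $\dim A_j=D/f_j=d^{\mathsf T}L_{\bullet j}$ by \eqref{eq:low-columns}. Their total contribution is therefore $p(1)\sum_jL_{ij}(L^{\mathsf T}d)_j=p(1)W_i$.

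For \eqref{eq:row-indicator} we take $q=1+\theta+\theta^2$. Then $p=3+2(\theta+\theta^{-1})+(\theta^2+\theta^{-2})$ and $p(1)=9$. Using $\nu_1=0$ and the fact that $\nu_2$ is real (it is $0$ or $\pm1$), the right side becomes $D(3d_i+2\nu_2(X_i))$.

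For \eqref{eq:row-third} we take $q=1+2\theta+2\theta^2+\theta^3$. Its autocorrelation coefficients are $c_0=10$, $c_1=8$, $c_2=4$, $c_3=1$, so $p(1)=36$. The bound becomes $36W_i\leq D(10d_i+8\nu_2+2\operatorname{Re}\nu_3)$. Self-duality gives $\nu_2\leq1$. The remaining input, which I expect to be the main point to justify, is $|\nu_3(X_i)|\leq a_i$. Here I would invoke the Ng--Schauenburg description of $\nu_3(X)$ as the trace of an order-three rotation on $\operatorname{Hom}(1,X^{\otimes3})$. For self-dual simple $X_i$ this space has dimension $N_{ii}^{i^*}=N_{ii}^i=a_i$, and a finite-order operator has eigenvalues that are roots of unity, so its trace has modulus at most $a_i$. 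Substituting these two bounds yields $10d_i+8+2a_i$, which is \eqref{eq:row-third}.
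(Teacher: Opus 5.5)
Your proposal is correct and matches the paper's proof. It uses the same two nonnegative polynomials, $|1+z+z^2|^2$ and $|(1+z)(1+z+z^2)|^2$ (your $q=1+2\theta+2\theta^2+\theta^3$ is the second one expanded), the same identification of the $I(1)$ contribution as $p(1)W_i$ via adjunction, complex conjugation for the negative moments, and the same bound $|\nu_3(X_i)|\leq a_i$ from the order-three operator on $\operatorname{Hom}(1,X_i^{\otimes 3})$.
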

\begin{proof}
On the unit circle the nonnegative polynomials
\begin{align*}
 |1+z+z^2|^2
  &=3+2(z+z^{-1})+(z^2+z^{-2}),\\
 |(1+z)(1+z+z^2)|^2
  &=10+8(z+z^{-1})+4(z^2+z^{-2})+(z^3+z^{-3})
\end{align*}
have values $9$ and $36$ at $1$, respectively; denote them by
$P_2(z)$ and $P_3(z)$. Put
$m_A=[I(X_i):A]\dim(A)\geq0$. Adjunction and
$\dim(A)=d^{\mathsf T}M_{\bullet A}$ show that the constituents of
$I(1)$ contribute exactly
\[
 \sum_{A\subset I(1)}m_A
 =\sum_{A\subset I(1)}M_{iA}(d^{\mathsf T}M_{\bullet A})
 =(LL^{\mathsf T}d)_i=W_i.
\]
Their twists equal one, whereas all remaining summands in
$\sum_A m_AP_k(\theta_A)$ are nonnegative. Also
$\sum_A m_A=Dd_i$. Since the weights are real and the twists lie on
the unit circle, the negative moments are the complex conjugates of
the positive moments in \eqref{eq:indicator-trace}. Thus
\begin{align*}
 9W_i&\leq\sum_A m_AP_2(\theta_A)
   =D\bigl(3d_i+4\operatorname{Re}\nu_1(X_i)
                    +2\operatorname{Re}\nu_2(X_i)\bigr),\\
 36W_i&\leq\sum_A m_AP_3(\theta_A)
   =D\bigl(10d_i+16\operatorname{Re}\nu_1(X_i)
          +8\operatorname{Re}\nu_2(X_i)
          +2\operatorname{Re}\nu_3(X_i)\bigr).
\end{align*}
The first assertion follows from $\nu_1(X_i)=0$ and the reality of
$\nu_2(X_i)$. For a self-dual simple, $\nu_2(X_i)\leq1$ and
\[
 \operatorname{Re}\nu_3(X_i)\leq|\nu_3(X_i)|
 \leq\dim\operatorname{Hom}(1,X_i^{\otimes3})=N_{ii}^{i}=a_i.
\]
Indeed, the third indicator is the trace of an operator of order
dividing three \cite[Theorem~5.1]{NgSchauenburgHigher}, so its absolute
value is bounded by the dimension of that space. This proves the
second assertion.
\end{proof}

The following \emph{local central induction} test combines the
restriction-column constraints with the indicator inequalities.
\begin{proposition}\label{prop:low-test}
A pseudo-unitary categorification requires a choice of columns satisfying
\eqref{eq:low-columns} and $L\mathbf1=B_{\bullet0}$, with
\begin{align}
 9W_i&\leq D(3d_i+2)&& (i>0,\ i=i^*),\label{eq:row-selfdual}\\
 3W_i&\leq Dd_i&& (i\ne i^*).\label{eq:row-pair}
\end{align}
If an invertible involution $g\ne1$ satisfies
$Q_{gg}=2$, $D/2-W_g=1$, and $W_g>0$, it also requires
\begin{equation}\label{eq:involution-test}
 2W_i+DQ_{gi}\leq D(d_i+1)\qquad(i\ne0,g).
\end{equation}
If every bounded column choice violates a displayed inequality, the ring
has no pseudo-unitary categorification. Permutations among nonunit
central columns with equal codegrees may be identified.
\end{proposition}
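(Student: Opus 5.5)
The first part of Proposition~\ref{prop:low-test} is bookkeeping around Lemma~\ref{lem:indicator-polynomials}. The columns $L$ of $I(1)$ satisfy \eqref{eq:low-columns} and $L\mathbf1=B_{\bullet0}$ by \eqref{eq:low-gram}. Inequality \eqref{eq:row-selfdual} follows from \eqref{eq:row-indicator} with $\nu_2(X_i)\le1$. For $i\ne i^*$ we have $\nu_2(X_i)=0$, so \eqref{eq:row-indicator} gives $9W_i\le 3Dd_i$, which is \eqref{eq:row-pair}. The clause about permutations holds because relabelling central simples with equal codegree changes neither $LL^{\mathsf T}$ nor $W$. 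The contrapositive then gives the exclusion statement. The real content is \eqref{eq:involution-test}.

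For the involution, I will first pin down the restriction columns outside $I(1)$ that meet $g$. Since $Q_{gg}=\sum_{A\not\subset I(1)}M_{gA}^2=2$ with integer entries, there are exactly two such central simples $A,A'$, each with $M_{gA}=M_{gA'}=1$. From $\sum_A M_{gA}\dim A=Dd_g=D$ and $D/2-W_g=1$ we get $a+a'=W_g+2$, where $a=\dim A$ and $a'=\dim A'$. The indicator formula \eqref{eq:indicator-trace} for $X_g$ with $n=1$ and $n=2$ gives
\[
 W_g+a\theta_A+a'\theta_{A'}=0,\qquad W_g+a\theta_A^2+a'\theta_{A'}^2=D\nu_2(g)=\pm(2W_g+2).
\]
In the minus case the triangle inequality forces equality, hence $W_g=0$; this is excluded by $W_g>0$. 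In the plus case equality in the triangle inequality forces $\theta_A^2=\theta_{A'}^2=1$. The first relation with $W_g>0$ then gives twists $+1$ and $-1$. Say $\theta_A=1$ and $\theta_{A'}=-1$; then $a=1$ and $a'=D/2$. Thus $A$ is invertible with $F(A)=g$, so $M_{iA}=0$ for $i\ne0,g$. Consequently $Q_{gi}=M_{iA'}$ for such $i$.

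For a row $i\ne0,g$, I apply positivity with $P(z)=1+\operatorname{Re}z$ to the weights $m_B=M_{iB}\dim B$. Here $\sum m_B=Dd_i$ and $\sum m_B\theta_B=0$. The $I(1)$ columns contribute $2W_i$, and $A'$ contributes $0$, so this gives $2W_i\le Dd_i$. To reach \eqref{eq:involution-test}, i.e.\ $2W_i+DM_{iA'}\le D(d_i+1)$, it remains to show $M_{iA'}\le1$. The bound I actually have from the twist $-1$ goes the wrong way: $1-\operatorname{Re}z$ yields only $DM_{iA'}\le Dd_i$.

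The main obstacle is therefore proving $[F(A'):X_i]\le1$. Alternatively one could find a sharper combination of the $n=1,2$ moments for row $i$ that charges $A'$ only $D$ per unit of multiplicity. I would first try to exploit $A\otimes A'\cong A'$, which holds since $A'$ is the unique central simple of dimension $D/2$ over $g$. Together with $\dim\operatorname{Hom}(I(1),A'\otimes A'^*)$ and $\dim A'=D/2$, this should give $\sum_i M_{iA'}d_i=D/2$ with strong rigidity on $M_{\bullet A'}$. Failing that, I would use the $n=2$ moment for row $i$, where $A'$ has twist $\theta_{A'}^2=1$, with a polynomial such as $1+\tfrac12(z+z^{-1})+\tfrac12\cdot$correction, to absorb the extra $D(M_{iA'}-1)$.
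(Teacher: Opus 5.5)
There is a genuine gap: you never prove the involution inequality \eqref{eq:involution-test}. Everything before that step is correct and matches the paper. This includes the row inequalities, the column constraints, and the identification of the two extra constituents over $g$: $A$ has dimension $1$, twist $1$ and $F(A)=g$, while $A'$ has dimension $D/2$, twist $-1$ and $M_{iA'}=Q_{gi}$ for $i\ne0,g$. The failure is your test polynomial $1+\operatorname{Re}z$. It vanishes at $z=-1$, so it throws away exactly the contribution of $A'$ that the inequality controls. That is why you are pushed to the auxiliary claim $M_{iA'}\le1$, which you do not prove and which the statement does not need.

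The missing idea is to use only the zeroth and second traces for row $i$, via the nonnegative polynomial $1+\operatorname{Re}(z^2)=1+\tfrac12(z^2+z^{-2})$. It equals $2$ at both $z=1$ and $z=-1$. The low constituents (twist $1$) and $A'$ (twist $-1$) all lie on $\theta^2=1$, and $A$ contributes nothing to row $i$. Hence
\[
 2\Bigl(W_i+\tfrac D2M_{iA'}\Bigr)\le\sum_B m_B\bigl(1+\operatorname{Re}\theta_B^2\bigr)=D\bigl(d_i+\operatorname{Re}\nu_2(X_i)\bigr)\le D(d_i+1).
\]
With $M_{iA'}=Q_{gi}$, this is exactly \eqref{eq:involution-test}, and it is the paper's argument. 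The extra $D$ on the right comes from $\operatorname{Re}\nu_2(X_i)\le1$, not from any bound on a multiplicity. Your fallback idea of using the $n=2$ moment pointed the right way, but no first-moment term or correction is needed.

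Your first proposed repair rests on a false claim. It is not true that $A\otimes A'\cong A'$. Indeed $[F(A\otimes A'):1]=[g\otimes F(A'):1]=[F(A'):g]=1$, so the simple object $A\otimes A'$ is a constituent of $I(1)$. But $[F(A'):1]=[I(1):A']=0$. So the dimension-$D/2$ simple obtained by tensoring with $A$ lies over $1$, not over $g$, and the uniqueness argument you intended does not apply.
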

\begin{proof}
The row inequalities follow from Lemma~\ref{lem:indicator-polynomials}.
For the last inequality, $Q_{gg}=2$ forces exactly two additional
central simples in $I(g)$, each once. The zeroth and second traces for $g$ are $D$ and
$D\nu_2(g)$, respectively, with $\nu_2(g)\in\{-1,1\}$.
Equality in the triangle inequality for the positive weights in
\eqref{eq:indicator-trace} forces every constituent of $I(g)$ to
have twist square $\nu_2(g)$. Since $W_g>0$, one of these
constituents has twist one, so $\nu_2(g)=1$ and all these twists
belong to $\{1,-1\}$. The first trace vanishes,
so the masses at $1$ and $-1$ are both $D/2$. The two additional
simples consequently have dimensions $1$ and $D/2$, and twists $1$
and $-1$, respectively. The first restricts to $g$; the second has
restriction column $Q_{g\bullet}^{\mathsf T}-e_g$.
For $i\ne0,g$, they and the low constituents provide mass
$W_i+(D/2)Q_{gi}$ supported on $\theta^2=1$ in $I(X_i)$.
The zeroth and second traces bound this mass by
$D(d_i+\nu_2(X_i))/2\leq D(d_i+1)/2$, proving
\eqref{eq:involution-test}.
Finally, the bounded enumeration includes every actual restriction
matrix. Permuting columns with equal codegrees changes neither its
admissibility nor $LL^{\mathsf T}$, $Q$, or $W$.
\end{proof}

%% file: source/uniform.tex
\section{A uniform dimension bound}\label{sec:uniform}
The estimates of Section~\ref{sec:central} yield a bound independent of
any census. This bound makes the subsequent classification a finite
problem; the sharper bounds in Corollary~\ref{cor:sharp} follow only
after that classification.
\begin{theorem}\label{thm:uniform}
Every rank-four fusion ring $R$ admitting a pivotal categorification satisfies
\[
 \FPdim(R)<3600,\qquad \mu(R)\leq59.
\]
\end{theorem}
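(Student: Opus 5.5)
Since $\mu(R)^2\le D$ by Lemma~\ref{lem:commutative} and $59^2<3600\le 60^2$, the multiplicity bound follows once $D<3600$ is established. I will therefore aim only at the dimension bound, and split into three cases as the introduction indicates.

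\emph{Non-self-dual rings.} By Lemma~\ref{lem:spherical} we may assume sphericality. For pointed rings there is nothing to prove. Otherwise the simples are $1,X,Y,Z$ with $Z=X^*$. I will first apply Galois reduction (Lemma~\ref{lem:galois-reduction}) or the arithmetic case below to reach the pseudo-unitary setting, and then invoke Larson's reduction to seven based rings \cite{Larson}. Each of these has small explicit $D$, far below $3600$.

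\emph{Self-dual, not Galois conjugate to the pseudo-unitary orbit.} Here the categorical dimension $\Delta=\sum c_i^2$ of the spherical structure is not conjugate to $D$, so the dimension character $c$ differs from every conjugate of $\FPdim$, and $\Delta$ equals $f_j$ for some $j\neq0$ up to conjugation. I will combine three facts:
\begin{itemize}
\item $\Delta/f$ is integral for every codegree $f$;
\item the reciprocal-codegree identity holds;
\item Ostrik's inequality \eqref{eq:spherical-mass} holds at every embedding.
\end{itemize}
The goal is to show that $D=f_0$ is bounded in terms of the other codegrees. Concretely, $\Delta/D$ is an algebraic integer with a conjugate $\Delta'/D'$, and taking norms as in \eqref{eq:norm-test} should force $D$ to divide a bounded quantity. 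Since each $f_j>1$ and $\sum f_j^{-1}=1$, the small codegrees are bounded, so $D$ is bounded. I expect a crude bound well under $3600$ after checking the few possible degree patterns of $\Q(c)$ (degree at most $4$).

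\emph{Pseudo-unitary, self-dual: the main obstacle.} Suppose $D\ge3600$. Apply \eqref{eq:row-indicator} with $\nu_2=\pm1$: for each nonunit $i$, $9W_i\le D(3d_i+2)$. Summing $d_iW_i$ gives
\[
9\,d^{\mathsf T}LL^{\mathsf T}d\le D\Bigl(3\sum_{i>0} d_i^2+2\sum_{i>0} d_i\Bigr).
\]
Moreover $L^{\mathsf T}d$ has entries $D/f_j$, so the left side equals $9\sum_j D^2/f_j^2$. Since the $j=0$ term alone contributes $9$ and $\sum_{i>0}d_i^2=D-1$, this yields a mass estimate of the form
\[
\sum_{j\ge1}\frac{D}{f_j^2}\lesssim \frac{1}{3}+O\bigl(D^{-1/2}\bigr).
\]
Combined with $\sum_j f_j^{-1}=1$, this forces a specific codegree profile and, via the per-row inequalities, concentrates the dimension in one simple $X$ with $d_X^2\approx D$, while the others have $d_i=o(\sqrt D)$.

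I would then show that $N_X$ is nearly rank one, hence actually singular: some character vanishes on $X$. This singularity, together with the degree of $\Q(d)$ being at most $4$, pins down the characteristic polynomial of $N_X$. Finally I would use the cubic inequality \eqref{eq:row-third}, involving $a_X=N_{XX}^X\le\sqrt D$, and compare $\Tr(N_X^3)=\sum_j\chi_j(X)^3$ against the integrality of the fusion coefficients. This should give a contradiction for $D\ge3600$.

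The delicate step is making the concentration quantitative with explicit constants so that the threshold comes out at $3600$. I would first try to bound the second-largest $d_i$ by a constant via \eqref{eq:row-indicator} applied to that row, and then finish by a finite case analysis.
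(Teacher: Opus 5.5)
Your three-case split and the deduction of $\mu(R)\le59$ from $\mu(R)^2\le D$ agree with the paper. However, none of the three cases is actually closed.

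\emph{Non-self-dual case.} You never justify that Lemma~\ref{lem:galois-reduction} applies, and Larson's theorem is a pseudo-unitary classification. Your fallback to ``the arithmetic case below'' is formulated for self-dual rings and is itself incomplete. The missing observation is that duality permutes the four characters with trace $2$, the number of self-dual basis elements, so exactly two characters are real-valued. If the Frobenius--Perron character is irrational, its Galois orbit consists of these two and hence contains the real spherical dimension character. Otherwise $D\in\Z$ and \cite[Proposition~8.24]{ENO} applies.

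\emph{Self-dual exceptional case.} Your mechanism does not bound $D$. Integrality of $H/D$, with $H$ the spherical global dimension, yields only $HH'=kDD'$ with $k\in\Z_{\ge1}$; this is all that \eqref{eq:norm-test} extracts. The reciprocal identity bounds only the \emph{smallest} codegree, so ``the small codegrees are bounded, so $D$ is bounded'' is a non sequitur. For instance, $D'$ slightly above $1$ with $H,H'$ of order $\sqrt D$ is compatible with both constraints for arbitrarily large $D$. The paper first shows by degree counting that $R\otimes\Q\cong K\times K$ for a real quadratic field $K$, so the codegrees are $D,D',H,H'$. It then proves $kr>2$ for $r=D/H$; when $k=1$, $r$ is a totally positive unit different from $1$, so $r+r^{-1}\ge3$. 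This gives $p=1/D+1/D'>\tfrac23-1/\sqrt D$. The identity $w-\tfrac12=2(p-\tfrac12)^2-2(1+1/k)/(DD')$ then contradicts Ostrik's inequality $w\le\tfrac12+\tfrac1{2H}$ once $D\ge576$. Ostrik's inequality, which you list but never use, is the decisive ingredient.

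\emph{Pseudo-unitary case.} Your summation drops the unit row: the left side should be $9(D^2w-D)$, and the resulting estimate concerns $\sum_{j\ge1}f_j^{-2}$, not $\sum_{j\ge1}D/f_j^2$. The conclusion that all nontrivial codegrees are near $3$ is still right.

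The second-indicator row inequalities cannot, however, produce concentration. With $f_j\approx3$ they say only $\sum_{j\ge1}L_{ij}\le d_i+\tfrac23$ to leading order, which a balanced profile $d_i^2\approx D/3$, $L_{ij}\approx d_i/3$ satisfies. Concentration requires \eqref{eq:row-third}, summed to $A=\sum_i d_ia_i\ge D-25-4S+24/D$ with $S=\sum_i d_i$. This is played against the upper bound on $A$ obtained from $a_i=\sum_j\chi_j(X_i)^3/f_j$ and $|\chi_j(X_i)|<2$. The result is $D-\max_i d_i^2<\tfrac72\sqrt D$, after which AM--GM gives $|\det N_X|^2<1$.

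More seriously, your endgame has no mechanism. $\Tr(N_X^3)$ is an integer for every integer matrix, so comparing it with integrality proves nothing. The missing idea is that each value $\chi(X)$ of a character outside the Frobenius--Perron orbit is an algebraic integer all of whose conjugates have absolute value below $\tfrac12$, hence vanishes. Writing $1,x,y,z$ for the dimensions and $K=\Q(x,y,z)$, this gives $\operatorname{rank}N_X=[K:\Q]\in\{2,3\}$.
\begin{itemize}
\item In rank two, the Schur complement of the invertible $\{1,X\}$ block, which is the $\{Y,Z\}$ block of $N_X$, vanishes. So $XY=yX$ and $XZ=zX$ with $y,z$ integers. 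The bound $f_j<4$ forces $y=z=1$, so $Y$, $Z$, and then $X=YZ$ are invertible and $D=4$.
\item In rank three, there is a rational character $(1,0,1,-1)$ of codegree $3$. It dictates the fusion rules and gives $\Tr_{K/\Q}(y)=N_{XY}^X+2N_{YY}^Y$. This fails because $0\le y-N_{XY}^X<1$, while both nontrivial conjugates of $y$ lie below $-\tfrac12$.
\end{itemize}
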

\subsection{Reduction to the self-dual pseudo-unitary case}\label{sec:pivotal-reduction}

\begin{lemma}\label{lem:exceptional}
Let $\mathcal C$ be a self-dual pivotal fusion category of rank four. Either it has a pseudo-unitary Galois conjugate, or $\FPdim(\mathcal C)<576$.
\end{lemma}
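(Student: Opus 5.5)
Since every simple is self-dual, Lemma~\ref{lem:spherical} (more precisely, the trivial self-dual case of its proof) shows that any pivotal structure on $\mathcal C$ is spherical. The categorical dimensions then form a spherical dimension character $c$, real-valued, and $\Delta=\sum_i c_i^2$ is the formal codegree of $c$. Write $c=\chi_k$, so $\Delta=f_k$.

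\emph{Reduction and divisibility.} If $\Delta$ has the same minimal polynomial as $D$, Lemma~\ref{lem:galois-reduction} produces a pseudo-unitary Galois conjugate and we are done. So I assume $\Delta$ is not Galois conjugate to $D$, that is, $\chi_k$ lies outside the Galois orbit $\mathcal O$ of $\chi_0$. Then $|\mathcal O|\le 3$, since rank four leaves at most three characters besides $\chi_k$. By Section~\ref{sec:spherical-arithmetic}, $\Delta/f$ is an algebraic integer for every codegree $f$. Applying this to $f_0=D$, the quotient $q=\Delta/D$ is a totally positive algebraic integer. Every Galois conjugate of $q$ has the form $\sigma(\Delta)/\sigma(D)$, a ratio of two codegrees, with the numerator in the orbit of $\chi_k$ and the denominator in $\mathcal O$. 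Since $D$ is the largest eigenvalue of $B$ (Lemma~\ref{lem:commutative}), the identity embedding gives $q<1$; note $q\neq1$ because $\Delta\neq D$. Total positivity and integrality then force the norm of $q$ to be at least $1$. Hence some conjugate $\sigma(\Delta)/\sigma(D)$ is at least $D/\Delta$, which is roughly $q^{-1/(e-1)}$ where $e$ is the degree. In other words, some conjugate $\sigma(D)$ must be small compared with a codegree outside $\mathcal O$.

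\emph{Lower bounds on small codegrees.} Next I bound every codegree away from $1$. The reciprocal-codegree identity $\sum_j f_j^{-1}=1$ gives $f_j>1$. Ostrik's inequality \eqref{eq:spherical-mass}, applied at every real embedding, gives $\sum_j f_j^{-2}\le\tfrac12+\tfrac1{2\Delta}$, where $\Delta$ ranges over its conjugates. Since $\Delta/D$ is integral and $D$ is large by assumption, $\Delta$ is not tiny, so the right-hand side is essentially $\tfrac12$. This bounds $\min_j f_j$ below by about $\sqrt2$, and it also bounds how many codegrees can be close to that minimum. Conversely, if $D$ is large, the reciprocal identity forces the remaining three codegrees to satisfy $\sum_{j\ne0}f_j^{-1}=1-D^{-1}$. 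So they are bounded above in terms of one another, and $\Delta$ and its conjugates stay within a bounded window unless some conjugate of $D$ is forced close to $1$.

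\emph{Case analysis (main obstacle).} I then split according to $|\mathcal O|\in\{1,2,3\}$.

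\begin{itemize}
\item If $|\mathcal O|=1$, then $D$ is rational, hence an integer. In that case $\mathcal C$ is pseudo-unitary by \cite[Proposition~8.24]{ENO}, which contradicts the assumption, so this case is vacuous.
\item If $|\mathcal O|=2$ or $3$, the conjugates of $D$ are codegrees, so their reciprocals sum with $1/\Delta$ (and the remaining codegree) to $1$. This bounds $\sigma(D)$ below by the Ostrik-type estimate. Combined with the norm inequality $N(\Delta)\ge N(D)$, which follows from \eqref{eq:norm-test}, this should give an explicit bound: $D\cdot\prod_{\sigma\ne1}\sigma(D)\le\prod\sigma(\Delta)$, with each factor on the right bounded. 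I expect the $|\mathcal O|=2$ quadratic case to give the extreme value, and I would optimize the resulting rational inequality to land under $576=24^2$.
\end{itemize}

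The hardest step is making these inequalities sharp enough, particularly tracking which codegree $\Delta$ pairs with under each embedding. I would first try the crude bound $f_j\ge\sqrt2$ together with $\prod\sigma(\Delta)\le$ a product of codegrees bounded via the reciprocal identity. If that is too weak, I would refine it with the exact mass inequality at the embedding where $\sigma(D)$ is smallest.
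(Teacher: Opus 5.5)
\textbf{There is a genuine gap at the decisive step.} Your preliminary steps are fine: sphericality, the Galois reduction when $\Delta$ is conjugate to $D$, total positivity and integrality of $\Delta/D$, and the observation that some conjugate ratio $\sigma(\Delta)/\sigma(D)$ must be large. But the proposal stops where the proof has to happen, and the route you sketch cannot work. In the case that actually occurs, $R\otimes\Q\cong K\times K$ with codegrees $D,D',H,H'$, arrange by conjugation that the spherical global dimension is $H\geq H'$. Integrality of $H/D$ gives $HH'=kDD'$ with $k\in\Z_{\geq1}$, hence $H\geq\sqrt{kDD'}>\sqrt D$. So the spherical global dimension is not confined to a bounded window, contrary to your claim. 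The norm comparison $D\prod_{\sigma\ne1}\sigma(D)\leq\prod_\sigma\sigma(\Delta)$ then collapses to $k\geq1$, which says nothing about the size of $D$. No optimization of that inequality yields $576$, or any bound.

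\textbf{The missing idea} is to play your large conjugate ratio against the balancing forced by Ostrik's inequality. Because $D$ and $H$ are both large, the reciprocal identity gives $1/D'+1/H'\approx1$. Meanwhile \eqref{eq:spherical-mass}, at the embedding with global dimension $H$, gives $1/D'^2+1/H'^2\lesssim\tfrac12$. Together these force $D'\approx H'\approx2$. But $H'/D'=kD/H=kr$ with $r=D/H>1$, and in fact $kr>2$. For $k\geq2$ this is immediate; for $k=1$, $r$ is a totally positive unit of norm one, so $r+r^{-1}\geq3$. The paper makes this quantitative through the identity $w-\tfrac12=2(p-\tfrac12)^2-2(1+1/k)/n$, with $n=DD'$ and $p=1/D+1/D'>\tfrac23-1/\sqrt D$. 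This gives $w-\tfrac12>\tfrac1{48}>\tfrac1{2H}$ once $D\geq576$.

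\textbf{You also omit the structural step that makes this the only case.} Every formal codegree lies in the field generated by the spherical dimensions (Section~\ref{sec:spherical-arithmetic}). So an irrational $D$ excludes $|\mathcal O|=3$ and any rational spherical character, and forces the other orbit to be a second copy of $K=\Q(D)$. Without that fact you would have to rule out by inequalities alone, for example, $|\mathcal O|=3$ with $D'\approx D''\approx2$ and $\Delta$ a large integer. There the two small codegrees are both conjugates of $D$, so no conjugate ratio separates them. The constraints you list do not exclude that configuration.
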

\begin{proof}
Write $D=\FPdim(\mathcal C)$. If $D$ is rational, it is an integer and the category is pseudo-unitary by \cite[Proposition~8.24]{ENO}. For the rest of the proof, assume that no Galois conjugate is pseudo-unitary. Then $D$ is irrational, and the spherical dimension character is outside the Frobenius--Perron orbit.

The field generated by spherical dimensions contains every formal codegree, including $D$, by \cite[Corollary~2.15]{OstrikRank3}. Let $K$ be the Frobenius--Perron field. Its degree is between two and four. Degree four leaves no other character orbit, and in degree three the other orbit is rational, so neither case is possible. If $[K:\Q]=2$, then $\Q(D)=K$ because $D$ is irrational.
The other character field contains $K$, and only two dimensions remain
in $R\otimes\Q$; it must therefore be another copy of $K$. Thus
\begin{equation}\label{eq:repeated-quadratic}
 R\otimes\Q\cong K\times K
\end{equation}
for a real quadratic field $K$, and the spherical dimension character belongs to the other factor.

Denote the four formal codegrees by $D,D',H,H'$, with primes indicating conjugation in $K$. We allow $H=H'$ and retain both character multiplicities. After Galois conjugation, arrange that the spherical global dimension is $H\geq H'$. Each codegree is positive and greater than one. Also $H\leq D$, because each character value has absolute value at most the corresponding Frobenius--Perron dimension. Equality would make the category pseudo-unitary, so $H<D$ in the exceptional case.

By \cite[Theorem~2.13]{OstrikRank3}, $H/D$ is a simple central dimension and hence an algebraic integer. Put
\[
 n=DD',\qquad k=\frac{HH'}{DD'}\in\Z_{\geq1},\qquad
 p=\frac1D+\frac1{D'},\qquad
 w=\frac1{D^2}+\frac1{(D')^2}+\frac1{H^2}+\frac1{(H')^2}.
\]
Here $k$ is the field norm of $H/D$, including when that element is rational. Since $D'>1$ and $H\geq H'$, we have
\begin{equation}\label{eq:exceptional-size}
 n>D,\qquad H\geq\sqrt{kn}>\sqrt{kD}.
\end{equation}
The reciprocal-codegree identity $D^{-1}+(D')^{-1}+H^{-1}+(H')^{-1}=1$
from \eqref{eq:orthogonality} gives
\begin{equation}\label{eq:exceptional-w}
 w-\frac12=2\left(p-\frac12\right)^2-\frac{2(1+1/k)}n.
\end{equation}

Ostrik's spherical inequality \eqref{eq:spherical-mass},
applied with global dimension $H$, gives
\[
 w\leq\frac12+\frac1{2H}.
\]

Set $r=D/H>1$. By \eqref{eq:exceptional-size}, $r<\sqrt{D/k}\leq\sqrt D$. Moreover, $kr>2$. This is immediate when $k\geq2$. Both $H/D$ and its conjugate $H'/D'$ are positive. When $k=1$,
$H/D$ is therefore a totally positive algebraic-integer unit of norm one. Thus $r$ is also such a unit, with conjugate $r^{-1}$; the integer $r+r^{-1}>2$ is at least three, and $r\geq(3+\sqrt5)/2>2$.

Since $H'=krD'$, the same identity gives
\[
 1=\frac{1+r}{D}+\frac{1+1/(kr)}{D'},\qquad
 p=\frac{kr}{kr+1}-\frac{kr^2-1}{(kr+1)D}
 >\frac23-\frac rD>\frac23-\frac1{\sqrt D}.
\]
If $D\geq576$, then $p>5/8$ and $H>24$. Equation~\eqref{eq:exceptional-w} consequently gives
\[
 w-\frac12>\frac1{32}-\frac4D
 \geq\frac7{288}>\frac1{48}>\frac1{2H},
\]
contradicting \eqref{eq:spherical-mass}. Thus $D<576$, as required.
\end{proof}

\begin{lemma}\label{lem:nonselfdual}
A pivotal fusion category of rank four with a non-self-dual simple has Frobenius--Perron dimension less than $45$.
\end{lemma}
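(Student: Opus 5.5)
Write the simples as $1,X,Y,Z$ with $Z=X^*$ and $Y=Y^*$. The proof splits on pointedness. If $\mathcal C$ is pointed, then $D=4$. Otherwise Lemma~\ref{lem:spherical} makes every pivotal structure spherical. The plan is to use Larson's classification of the non-self-dual pseudo-unitary case \cite{Larson} and to reduce to it by an arithmetic argument in the spirit of Lemma~\ref{lem:exceptional}.

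\emph{Pseudo-unitary reduction.} First I would show that some Galois conjugate of $\mathcal C$ is pseudo-unitary; Lemma~\ref{lem:galois-reduction} then lets us assume $\mathcal C$ is pseudo-unitary. The characters are constrained by the duality: $\chi(X)$ and $\chi(Z)$ are complex conjugates, while $\chi(Y)$ is real. The spherical dimension character $c$ satisfies $c_X=c_Z$, so it must be a real-valued character.

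I would then study the Wedderburn decomposition of $R\otimes\Q$. The Frobenius--Perron character is real. Since $b_X\ne b_Z$ but orthogonality \eqref{eq:orthogonality} distinguishes them, some character must be non-real. This leaves two shapes:
\begin{itemize}
\item $R\otimes\Q\cong\Q\times\Q\times F$ with $F$ imaginary quadratic, or
\item $R\otimes\Q\cong\Q\times E$ with $E$ a CM or cubic-type field.
\end{itemize}
Enumerating the possibilities for the pair of real characters, one of which is $\FPdim$, I expect one of two outcomes. Either the real characters form a single Galois orbit together with $\FPdim$, in which case Lemma~\ref{lem:galois-reduction} applies. Or they are all rational, in which case $D$ is an integer and \cite[Proposition~8.24]{ENO} gives pseudo-unitarity.

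The remaining worry is a real character lying outside the $\FPdim$ orbit while $D$ is irrational. This would need two real quadratic factors, which leaves no room for the required complex character. So that configuration cannot occur.

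\emph{Applying Larson.} With pseudo-unitarity established, \cite[Theorem~1.1]{Larson} restricts the Grothendieck ring to seven explicit based rings. Their global dimensions can be read off directly: the largest are, for example, $\Rep(A_4)$ with $D=12$ and the near-group-like rings with $D$ up to about $24+12\sqrt3\approx44.8$. All lie below $45$, so the bound follows.

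If Larson's list included a family that is not explicit, I would instead bound $D$ directly. Here I would use \eqref{eq:row-pair}, $3W_i\le Dd_i$ for $i=X,Z$, together with the column constraints \eqref{eq:low-columns}.

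\emph{Main obstacle.} The delicate step is the case analysis of the character fields, namely ruling out a non-pseudo-unitary spherical structure whose dimension character is real but not Galois conjugate to $\FPdim$. I would handle it by counting degrees: four characters in total, with one complex-conjugate pair forced by $X\ne X^*$.
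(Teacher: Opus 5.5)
Your route is the paper's: sphericality from Lemma~\ref{lem:spherical}, a Galois reduction to the pseudo-unitary case, and then Larson's seven candidates. Among these, the maximum $24+12\sqrt3<45$ comes from the near-group with $m=6$, and the other family stays at most $20+8\sqrt5$. $\Rep(A_4)$ is the $m=2$ near-group and is not among the largest. However, the reduction step, which you rightly flag as the crux, is not established by what you wrote.

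Your list of Wedderburn shapes is wrong. A factorization $\Q\times E$ forces $[E:\Q]=3$, so $E$ cannot be CM. You also omit $K\times F$ with $K$ real quadratic and $F$ imaginary quadratic, which is the shape of five of Larson's seven candidates: the near-groups with $m=0,3,6$ and both members of the second family. More seriously, your dichotomy (``one orbit with $\FPdim$, or all rational'') misses $\Q\times E$ with $E$ cubic with one real and two complex embeddings. If $\FPdim$ were the real embedding of $E$, then $D$ would be irrational. The rational character would then be real-valued and outside the $\FPdim$ orbit, so it could be the spherical one. This is exactly the configuration you call the main obstacle. It does not require ``two real quadratic factors'', and counting degrees (four characters, one complex pair) does not exclude it.

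The missing observation is that every Galois conjugate $\sigma\circ\FPdim$ satisfies $\sigma(d_X)=\sigma(d_Z)$. Since every character has $\chi(Z)=\overline{\chi(X)}$, each such conjugate is real-valued. Hence the $\FPdim$ orbit lies inside the set of real-valued characters. Your count shows this set has at most two elements; the paper gets exactly two from the trace of the duality involution on the basis and its permutation action on primitive idempotents. Two cases then remain:
\begin{enumerate}
\item If $\FPdim$ is rational, then $D\in\Z$, and pseudo-unitarity follows from \cite[Proposition~8.24]{ENO}.
\item Otherwise its orbit has at least two members, so it is the whole set of real characters. That set contains the spherical dimension character, and Lemma~\ref{lem:galois-reduction} applies.
\end{enumerate}
This replaces your case analysis of fields entirely. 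The fallback via \eqref{eq:row-pair} is not needed.
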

\begin{proof}
A pointed category has dimension four, so assume the category is not pointed. Write the basis as $1,X,Y,Z$, where $X^*=Z$ and $Y^*=Y$.

By Lemma~\ref{lem:spherical}, the given pivotal structure is spherical.

There are exactly two real-valued characters of this fusion ring. Indeed, by the adjoint relation $N_{U^*}=N_U^{\mathsf T}$ and simultaneous unitary diagonalization, $\chi(U^*)=\overline{\chi(U)}$ for every basis element $U$. Extend basis duality $\C$-linearly. This involution has trace two on the distinguished basis; on the primitive idempotents it is a permutation, so exactly two characters are fixed, precisely the real-valued ones. If the Frobenius--Perron character is irrational, its Galois orbit has at least two members, all fixed by duality, and hence consists of these two characters. The spherical dimension character is real-valued, so it belongs to that orbit. Lemma~\ref{lem:galois-reduction} gives a pseudo-unitary Galois conjugate. If the Frobenius--Perron character is rational, the global Frobenius--Perron dimension is an integer and pseudo-unitarity follows directly from \cite[Proposition~8.24]{ENO}.

It remains to use the pseudo-unitary candidate theorem \cite[Theorem~1.1]{Larson}. Besides $\Z[C_4]$, the candidates have simple dimensions
\[
 1,1,1,\frac{m+\sqrt{m^2+12}}2
 \quad(m=0,2,3,6),
\]
or
\[
 1,1,c+\sqrt{c^2+1},c+\sqrt{c^2+1}
 \quad(c=1,2).
\]
The first family has total dimension at most $24+12\sqrt3<45$, and the second at most $20+8\sqrt5<38$. Galois conjugation leaves the fusion ring, and hence its Frobenius--Perron dimension, unchanged. This proves the lemma.
\end{proof}

\subsection{Indicator estimates and concentration}\label{sec:concentration}

Let $\mathcal C$ be a pseudo-unitary fusion category of rank four whose
simple objects $1,X_1,X_2,X_3$ are self-dual. We use its canonical
spherical structure and put
\[
 d_i=\FPdim(X_i),\qquad D=1+\sum_{i=1}^3d_i^2,\qquad
 S=\sum_{i=1}^3d_i,\qquad
 a_i=N_{ii}^{i},\qquad A=\sum_{i=1}^3d_i a_i.
\]
Index the characters so that $\chi_0=\FPdim$ and $f_0=D$.
Besides the orthogonality relations \eqref{eq:orthogonality}, we use
\begin{equation}\label{eq:short-character-identities}
 a_i=\sum_{j=0}^3\frac{\chi_j(X_i)^3}{f_j},
\end{equation}
obtained by simultaneously diagonalizing the fusion matrices.

\begin{lemma}\label{lem:short-moments}
Writing $w=\sum_{j=0}^3 f_j^{-2}$, one has
\begin{align}
 9(D^2w-D)&\leq D\bigl(3(D-1)+2S\bigr),
       \label{eq:short-second-moment}\\
 36(D^2w-D)&\leq D\bigl(10(D-1)+8S+2A\bigr).
       \label{eq:short-third-moment}
\end{align}
Consequently,
\begin{equation}\label{eq:short-diagonal-lower}
 A\geq D-25-4S+\frac{24}{D}.
\end{equation}
\end{lemma}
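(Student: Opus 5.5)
The plan is to sum the row inequalities of Lemma~\ref{lem:indicator-polynomials} over the nonunit simples, weighted by $d_i$, and to compute the resulting weighted sum of the $W_i$ exactly in terms of the codegrees.

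\emph{Step 1: the weighted sum of the $W_i$.} First I will show that
\[
 \sum_{i=1}^3 d_iW_i=D^2w-D .
\]
By definition, $d^{\mathsf T}LL^{\mathsf T}d=\sum_{A\subset I(1)}\dim(A)^2$. The constituents of $I(1)$ are indexed by the characters, with $\dim(A_j)=D/f_j$, so this equals $\sum_j D^2/f_j^2=D^2w$. The unit row of $L$ consists entirely of ones, since every $A_j$ restricts to contain the unit once (this is $v_0=1$ in \eqref{eq:low-columns}). Hence
\[
 W_0=\sum_j\dim(A_j)=\sum_j D/f_j=D,
\]
by the reciprocal-codegree identity. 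Subtracting $d_0W_0=D$ gives the displayed formula.

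\emph{Step 2: the two moment inequalities.} Every $X_i$ is self-dual, so $\nu_2(X_i)\le1$. Multiplying \eqref{eq:row-indicator} by $d_i>0$ and summing over $i=1,2,3$ gives
\[
 9(D^2w-D)\le D\Bigl(3\sum_{i\ge1} d_i^2+2S\Bigr).
\]
Since $\sum_{i\ge1}d_i^2=D-1$, this is \eqref{eq:short-second-moment}. In the same way, weighting \eqref{eq:row-third} by $d_i$ and summing gives \eqref{eq:short-third-moment}, because $\sum_{i\ge1} d_ia_i=A$.

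\emph{Step 3: a lower bound on $w$.} The term $f_0=D$ is fixed, and the other three reciprocals sum to $1-1/D$. By Cauchy--Schwarz,
\[
 w\ge \frac1{D^2}+\frac{(1-1/D)^2}{3},
 \qquad\text{so}\qquad
 D^2w-D\ge 1-D+\frac{(D-1)^2}{3}.
\]
Inserting this into \eqref{eq:short-third-moment}, the left side becomes at least $12(D-1)^2-36(D-1)=12D^2-60D+48$. Rearranging gives
\[
 2DA\ge 2D^2-50D+48-8SD,
\]
and dividing by $2D$ yields \eqref{eq:short-diagonal-lower}.

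The only delicate point is the identification in Step 1. It requires the bookkeeping that the columns of $L$ correspond bijectively to the characters, counted with multiplicity, and that each such column has unit entry one; this is the cited result of Ostrik on $I(1)$. Everything after that is routine algebra.
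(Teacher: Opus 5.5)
Your proposal is correct and follows the paper's proof essentially step for step. You obtain $\sum_{i\geq1}d_iW_i=D^2w-D$ from the all-ones unit row of $L$ together with $\dim(A_j)=D/f_j$, then sum the $d_i$-weighted row inequalities of Lemma~\ref{lem:indicator-polynomials}, and finally insert the Cauchy--Schwarz bound $w\geq D^{-2}+\frac13(1-D^{-1})^2$ into the third-moment inequality; your Step~3 arithmetic is a rearrangement of the paper's intermediate bound $A+4S\geq18Dw-5D-13$.
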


\begin{proof}
Use the restriction masses $W_i$ from Section~\ref{sec:central},
and put $u_j=D/f_j$. Since each constituent of $I(1)$ restricts to an
object containing the unit once,
\[
 \sum_{i=1}^3d_iW_i
   =\sum_{j=0}^3u_j(u_j-1)=D^2w-D.
\]
Lemma~\ref{lem:indicator-polynomials} gives
\[
 9W_i\leq D(3d_i+2),\qquad
 36W_i\leq D(10d_i+8+2a_i).
\]
Multiplying by $d_i$ and summing proves the first two assertions.
The second gives $A+4S\geq18Dw-5D-13$. Finally,
\[
 w\geq\frac1{D^2}+\frac13\left(1-\frac1D\right)^2
   =\frac13-\frac2{3D}+\frac4{3D^2}
\]
by Cauchy--Schwarz and \eqref{eq:orthogonality}.
Substitution proves \eqref{eq:short-diagonal-lower}.
\end{proof}

\begin{lemma}\label{lem:short-concentration}
Suppose that $D\geq3600$. Put $M=\max_i d_i^2$ and $E=D-M$. Then
\begin{gather}
 2<f_j<4\quad(1\leq j\leq3),\qquad f_1f_2f_3<36,
       \label{eq:short-codegrees}\\
 E<\frac72\sqrt D.
       \label{eq:short-dominance}
\end{gather}
There is a unique simple object $X$ of maximum dimension. Its fusion
matrix is singular, and
\begin{equation}\label{eq:short-small-eigenvalues}
 |\chi_j(X)|^2<\frac{14}{\sqrt D}\leq\frac7{30}<\frac14
 \qquad(1\leq j\leq3).
\end{equation}
\end{lemma}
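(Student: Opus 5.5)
The plan is to squeeze the codegrees from both sides using the two moment inequalities of Lemma~\ref{lem:short-moments}, then to turn codegree control into dimension concentration through column orthogonality and the lower bound \eqref{eq:short-diagonal-lower} on $A$. Singularity will come from an integrality (norm) argument.

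\emph{Codegrees.} By Cauchy--Schwarz, $S\leq\sqrt{3(D-1)}$. Inserting this into \eqref{eq:short-second-moment} gives
\[
 w\leq\frac1D+\frac{D-1}{3D}+\frac{2S}{9D}=\frac13+O(D^{-1/2}).
\]
On the other hand, the reciprocal-codegree identity gives $\sum_{j\geq1}f_j^{-1}=1-1/D$. Hence
\[
 \sum_{j\geq1}\Bigl(f_j^{-1}-\tfrac{1-1/D}{3}\Bigr)^2
 =w-\frac1{D^2}-\frac{(1-1/D)^2}{3}
 \leq\frac{2\sqrt3}{9\sqrt D}+O(D^{-1}).
\]
For $D\geq3600$ the right side is below roughly $0.007$. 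So each $f_j^{-1}$ lies within about $0.085$ of $1/3$, which gives $2<f_j<4$. The product $f_1f_2f_3$ is maximized, under the fixed reciprocal sum and this deviation bound, at the extreme admissible configuration. A direct evaluation there should give $f_1f_2f_3<36$; this is routine but I would check the constant numerically.

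\emph{Dominance.} Column orthogonality \eqref{eq:orthogonality} for $X_i$ reads
\[
 \sum_{j\geq1}\frac{\chi_j(X_i)^2}{f_j}=1-\frac{d_i^2}D .
\]
Also $|\chi_j(X_i)|^2\leq f_j<4$ by row orthogonality. Feeding these into \eqref{eq:short-character-identities} gives
\[
 a_i\leq\frac{d_i^3}D+2\Bigl(1-\frac{d_i^2}{D}\Bigr),
 \qquad\text{hence}\qquad
 A\leq\frac{\sum_id_i^4}{D}+2S .
\]
Combining with \eqref{eq:short-diagonal-lower} yields $\sum_id_i^4\geq D(D-25-6S)+24$.

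Now write $M=\max_id_i^2$, and note that the other two squares sum to $E-1$. This gives the two estimates
\[
 \sum_id_i^4\leq M^2+(E-1)^2,\qquad S\leq\sqrt M+\sqrt{2(E-1)} .
\]
Substituting $M=D-E$ produces an inequality of the shape $E(1-E/D)\lesssim12.5+3S\approx3\sqrt D+O(\sqrt E)$. Solving it, in the branch $E<D/2$, gives $E<\tfrac72\sqrt D$ once $D\geq3600$. The other branch has to be excluded separately, since there the $\sqrt{2E}$ term in $S$ is comparable to $\sqrt D$.
\begin{itemize}
\item If $E\geq D/2$, then $M\leq D/2$, so $\sum_id_i^4\leq MD\leq D^2/2$.
\item Meanwhile $S\leq\sqrt{3D}$ forces $\sum_id_i^4\geq D(D-25-6\sqrt{3D})>D^2/2$ for large $D$.
\end{itemize}
Since $E<\tfrac72\sqrt D<D/2$, at most one $d_i^2$ can exceed $D/2$, so the maximizer $X$ is unique. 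Its column orthogonality gives $\sum_{j\geq1}\chi_j(X)^2/f_j=E/D$. Therefore $|\chi_j(X)|^2<f_jE/D<4\cdot\tfrac72/\sqrt D=14/\sqrt D\leq7/30$, which is \eqref{eq:short-small-eigenvalues}.

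\emph{Singularity, the expected obstacle.} The eigenvalues of $N_X$ are $d,\chi_1(X),\chi_2(X),\chi_3(X)$, and this set is stable under Galois action. Take $j\geq1$ with $\chi_j(X)\neq0$.
\begin{itemize}
\item If the Galois orbit of $\chi_j(X)$ avoids $d$, all its conjugates have absolute value below $1/2$. Its norm is then a nonzero integer of absolute value less than one, which is impossible.
\item So a nonzero small eigenvalue forces $d$ irrational and conjugate to some $\chi_j(X)$. Then $\chi_0$ and $\chi_j$ lie in one character orbit, and $D$ is Galois conjugate to $f_j<4$.
\end{itemize}
The crude bound on $|\det N_X|$ only gives about $6.8$, so a different integrality input is needed to exclude this case. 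I would try first the Galois conjugate category $\sigma(\mathcal C)$, which has spherical global dimension $\Delta=\sigma(D)=f_j\in(2,4)$. Its codegree set is the same multiset $\{D,f_1,f_2,f_3\}$, so \cite[Theorem~2.13]{OstrikRank3} makes $f_j/D$ an algebraic integer. Its conjugates are ratios of codegrees in the orbit of $D$, and the norm should have absolute value below one, using $f_j/D<4/3600$ and the bounds $2<f_k<4$. If the orbit contains several small codegrees, I would control the remaining ratios with \eqref{eq:norm-test}. This conjugacy case is where I expect the real work. Once it is excluded, every $\chi_j(X)$ with $j\geq1$ would be zero rather than merely small; in particular $N_X$ is singular.
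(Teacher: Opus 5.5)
There are two genuine gaps, one in the dominance step and one in the singularity step. There is also a smaller numerical slip in the codegree step.

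\emph{Dominance.} Your bound $A\le\sum_id_i^4/D+2S$ throws away the factor $1-d_i^2/D$, and for the dominant simple this factor is $E/D$. You therefore charge the dominant simple's non-Frobenius--Perron contribution as $2d_3\approx2\sqrt D$, whereas it is at most $2d_3E/D\le2E/\sqrt D$.

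Your resulting inequality $2E(1-E/D)\lesssim25+6S$, with $S\le\sqrt{D-E}+\sqrt{2(E-1)}$, has leading constant $3$ only asymptotically. It does not give $E<\frac72\sqrt D$ at $D=3600$. At $E=210$ the left side is $395.5$ and the right side is about $497$, so nothing is excluded; the inequality only yields roughly $E<4.6\sqrt D$ there.

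The paper keeps the factor, as in \eqref{eq:short-spectral-upper}, and bounds $\sum_id_i(1-d_i^2/D)\le E/\sqrt D+\sqrt{2E}$. This replaces your $6S$ by $4S+2E/\sqrt D+2\sqrt{2E}$ and produces $g(E)\le0$ in \eqref{eq:short-g}. After the coarse confinement \eqref{eq:short-coarse-dominance}, this is contradicted on $[\frac72\sqrt D,13\sqrt D]$ because $g$ is increasing there and $g(\frac72\sqrt D)>\frac12$. The sharp constant is needed downstream: it gives $7/30<1/4$ in \eqref{eq:short-small-eigenvalues} and makes the determinant bound below work.

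The smaller slip: your codegree numerics do not close as written. A deviation of $0.085$ from $1/3$ allows $f_j^{-1}<1/4$. Use $|q_j-m|\le\sqrt{2V/3}$ for three numbers of sum zero, as the paper does.

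\emph{Singularity.} Your norm argument for the conjugacy case cannot work. If $\sigma\chi_0=\chi_k$, then $f_k=\sigma(D)$ is a Galois conjugate of $D$, so $f_k/D$ has norm exactly $1$. Its tiny value $f_k/D<4/3600$ is balanced by a conjugate such as $D/f_k$, and if $f_k/D$ is integral it is simply a unit. The test \eqref{eq:norm-test} is also blind here, since conjugates have equal norms.

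Your target is also far stronger than the lemma. If every $\chi_j(X)$ with $j\ge1$ vanished, $N_X$ would have rank one, contradicting its invertible principal block on $1,X$; you would be proving all of Proposition~\ref{prop:pu}. The lemma needs only one zero eigenvalue. The nonzero conjugates of $x$ are exactly the eigenvalues counted in \eqref{eq:rank-degree}, and excluding them is the content of the later rank-two and rank-three arguments.

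The missing idea is to use the product bound $f_1f_2f_3<36$, which you proved but never used; this is its only purpose in the lemma. Put $y_j=\chi_j(X)^2/f_j\ge0$, so that $\sum_{j\ge1}y_j=E/D$ by column orthogonality. AM--GM then gives
\[
 |\det N_X|^2=M\,f_1f_2f_3\,y_1y_2y_3<36D\Bigl(\frac{E}{3D}\Bigr)^3<\frac{343}{6\sqrt D}\leq\frac{343}{360}<1,
\]
so the integer $\det N_X$ vanishes. Bounding each $|\chi_j(X)|^2$ separately by $14/\sqrt D$ loses the factor $48=27\cdot64/36$, which is why you obtained $6.8$. Moreover, with $E\approx4.6\sqrt D$ in place of $\frac72\sqrt D$, even the AM--GM estimate would exceed one, so both gaps must be repaired together.
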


\begin{proof}
Put $t=\sqrt D\geq60$, $q_j=f_j^{-1}$,
$m=(1-D^{-1})/3$, and $V=\sum_{j=1}^3(q_j-m)^2$.
Substitution of $w=D^{-2}+3m^2+V$ into
\eqref{eq:short-second-moment} gives
\[
 V\leq\frac4{3D}+\frac{2S}{9D}-\frac4{3D^2}
 <\frac4{3t^2}+\frac7{18t}\leq\frac{37}{5400}<\frac1{144},
\]
where $S<\sqrt3t<7t/4$.
Three real numbers of sum zero each have square at most two thirds of
their squared norm. Therefore
\[
 |q_j-m|\leq\sqrt{2V/3}<\frac1{14},\qquad
 \frac14<\frac{11}{42}-\frac1{3D}<q_j
 <\frac{17}{42}<\frac12.
\]
This proves $2<f_j<4$. Writing $q_j=1/4+r_j$ with $r_j>0$ gives
\[
 q_1q_2q_3
 \geq\frac1{64}+\frac{r_1+r_2+r_3}{16}
 =\frac1{32}-\frac1{16D}>\frac1{36},
\]
and hence the product bound.

Relabel so that $d_3^2=M$. Since $f_j<4$, every non-Frobenius--Perron
character value on a nonunit simple has absolute value less than $2$.
Equations~\eqref{eq:orthogonality} and \eqref{eq:short-character-identities} imply
\begin{equation}\label{eq:short-spectral-upper}
 A\leq\frac{\sum_i d_i^4}{D}
    +2\sum_i d_i\left(1-\frac{d_i^2}{D}\right).
\end{equation}
Initially, $\sum_i d_i^4/D\leq M$ and
\eqref{eq:short-diagonal-lower} give
\begin{equation}\label{eq:short-coarse-dominance}
 E\leq25+6S<25+12t<13t.
\end{equation}
The coarse bound confines $E$ to an interval on which the following
comparison is effective; it is not yet the bound needed for the
determinant argument. For a sharper estimate, use
\[
 S\leq t+\sqrt{2E},\qquad
 \sum_i d_i^4\leq(D-E)^2+(E-1)^2,
\]
and
\[
 \sum_i d_i\left(1-\frac{d_i^2}{D}\right)
 \leq\frac{E}{t}+\sqrt{2E}.
\]
Here the largest dimension contributes $d_3E/D\leq E/t$, and the
other two contribute at most $d_1+d_2\leq\sqrt{2E}$.
Combining these estimates with \eqref{eq:short-diagonal-lower} and
\eqref{eq:short-spectral-upper} gives $g(E)\leq-(2E+23)/t^2<0$, and in particular
\begin{equation}\label{eq:short-g}
 g(E):=2E-\frac{2E^2}{t^2}-\frac{2E}{t}
              -6\sqrt{2E}-4t-25\leq0.
\end{equation}
Suppose $E\geq E_0:=7t/2$. On $[E_0,13t]$,
\[
 g'(E)=2-\frac{4E}{t^2}-\frac2t-\frac{3\sqrt2}{\sqrt E}
 >2-\frac{54}{t}-\frac6{\sqrt{7t}}>0.
\]
But
\[
 (3t-57)^2-252t=9\bigl((t-60)(t-6)+1\bigr)>0,
\]
so
\[
 g(E_0)=3t-\frac{113}{2}-6\sqrt{7t}>\frac12.
\]
This contradicts \eqref{eq:short-g} and proves
\eqref{eq:short-dominance}. In particular $E<D/2$, so $M>D/2$ and
the largest simple is unique.

Write $X=X_3$ and let $N_X$ be its fusion matrix. Put
$y_j=\chi_j(X_3)^2/f_j$ for $1\leq j\leq3$.
Orthogonality gives $y_1+y_2+y_3=E/D$. The arithmetic--geometric mean
inequality now shows
\begin{align*}
 |\det N_X|^2
 &=M(f_1f_2f_3)y_1y_2y_3\\
 &<36D\left(\frac{E}{3D}\right)^3
 <\frac{343}{6\sqrt D}\leq\frac{343}{360}<1.
\end{align*}
Thus $\det N_X=0$, since it is an integer. This is an exact
integrality argument: the strict upper bound is already less than one
at $D=3600$, so no limiting or numerical assertion is involved. Finally,
$\chi_j(X_3)^2/f_j\leq E/D$ and $f_j<4$ imply
\eqref{eq:short-small-eigenvalues}.
\end{proof}

\subsection{The self-dual pseudo-unitary bound}\label{sec:pu}

\begin{proposition}\label{prop:pu}
If a pseudo-unitary fusion category has rank four and all its simple objects are self-dual, then its Frobenius--Perron dimension is less than $3600$.
\end{proposition}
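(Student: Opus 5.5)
We argue by contradiction: assume $D\geq3600$ and apply Lemma~\ref{lem:short-concentration}. There is then a unique simple $X=X_3$ with $d:=d_3$, $M=d^2$, and $E=D-M<\tfrac72\sqrt D$. Its fusion matrix $N_X$ is a normal integer matrix with eigenvalues $d,\chi_1(X),\chi_2(X),\chi_3(X)$. At least one of the last three vanishes, and all three satisfy $|\chi_j(X)|^2<1/4$. The characteristic polynomial of $N_X$ lies in $\Z[t]$ and is divisible by $t$. Write it as $t\,P(t)$ with $P$ monic of degree three. We split according to the degree $e=[\Q(d):\Q]\in\{1,2,3\}$, since the Galois conjugates of $d$ are among the roots of $P$.

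\textbf{Degrees one and two.} The remaining roots of $P$ are not conjugates of $d$, so they are products of rational integers or of conjugate pairs of algebraic integers of absolute value below $1/2$.

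If $e=1$, the other two roots of $P$ have integral sum and product, each of absolute value $<1$, $<1/4$ respectively. Hence both vanish and $N_X$ has rank one. A normal rank-one matrix with Perron vector $d$ equals $\frac{d}{D}\,(d_i)(d_k)^{\mathsf T}$. Comparing the entry $N_{X0}^{X}=1$ gives $d\cdot d/D=1$, that is, $M=D$. This contradicts $E\geq1$.

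If $e=2$, the conjugate $d'$ equals some $\chi_j(X)$ with $|d'|<1/2$. The last root is then a rational integer of absolute value $<1/2$, hence zero. So $N_X$ is normal of rank two with eigenvalues $d,d'$ and integral invariants $d+d'$ and $dd'$. The latter has absolute value $<d/2$. We then compare $\Tr(N_XN_X^{\mathsf T})=\sum_{i,k}(N_{Xi}^k)^2=d^2+d'^2$ with the row of $N_X$ at the unit and at $X$. Since $X$ is self-dual, $X^2=1+\sum_k N_{XX}^kX_k$, and therefore
\[
 M=1+\sum_kN_{XX}^kd_k .
\]
The two nonzero rows $e_0^{\mathsf T}N_X=e_X^{\mathsf T}$ and $e_X^{\mathsf T}N_X$ already contribute $1+\sum_k(N_{XX}^k)^2$. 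Together with $E<\tfrac72\sqrt D$, this should force the small rank-two normal matrix to have its image spanned by the unit-related rows. That yields an explicit equation $M=dd'+\text{(small integer)}$, which is incompatible with $|d'|<1/2$ once $D\geq3600$.

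\textbf{Degree three.} This is the main case and the one I expect to be the obstacle. Here $P$ is irreducible with roots $d,\chi,\chi'$ with $|\chi|,|\chi'|<1/2$, and all of $\operatorname{Tr}N_X^k$ are integers equal to $d^k+\chi^k+\chi'^k$. The plan is to compute $\operatorname{Tr}N_X^2$ and $\operatorname{Tr}N_X^3$ combinatorially, using self-duality and Frobenius reciprocity. One has $\operatorname{Tr}N_X^2=\sum_{i,k}N_{Xi}^kN_{Xk}^i=\sum_i N_{XX}^{\,\cdot}$-type counts, and $\operatorname{Tr}N_X^3$ is expressible through $\dim\operatorname{Hom}(X_i,X^{3}\otimes X_i)$. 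We then compare with $d^3+O(1)$ and with $d\cdot M=d\,(1+\sum_kN_{XX}^kd_k)$.

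Since the small dimensions satisfy $d_1^2+d_2^2=E-1<\tfrac72\sqrt D$, the integer $\operatorname{Tr}N_X^3-d\operatorname{Tr}N_X^2$ is a controlled quantity. It equals $\chi^2(\chi-d)+\chi'^2(\chi'-d)$, which has absolute value $<d/2$ but is forced by the combinatorial expression to be congruent to, or bounded below by, a multiple of $d$ coming from $a_3=N_{XX}^X$ and the lower bound \eqref{eq:short-diagonal-lower}, $A\geq D-25-4S+24/D$. Indeed \eqref{eq:short-diagonal-lower} together with $E<\tfrac72\sqrt D$ shows $a_3d\approx D$, so $a_3\approx d$. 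Substituting $a_3$ into the cubic trace should give a discrepancy of order $d$ between the two sides.

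I expect the bookkeeping of this cubic trace to be the delicate step. What I would try first is to write $N_X$ in block form with respect to $\{X\}$ and its complement and exploit the singularity of $N_X$, aiming to get the contradiction from the $(X,X)$ entry alone.
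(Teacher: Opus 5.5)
\textbf{There is a genuine gap in the degree-three case.} Your degree-one case is correct. Reading off the rank of $N_X$ from the factorization of its integral characteristic polynomial is also a legitimate alternative to a norm argument on character orbits. The problem is the degree-three case, which carries the whole difficulty, and the mechanism you sketch there cannot succeed.

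First, $\Tr N_X^3-d\Tr N_X^2$ is not an integer, since $d$ is a cubic irrationality. More fundamentally, the identities $\Tr N_X^k=d^k+\chi^k+\chi'^k$ hold automatically for every integer matrix with characteristic polynomial $t\,P(t)$. So no combinatorial trace computation can produce a ``discrepancy'', and $N_{XX}^X\approx d$ is automatic as well.

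Concretely, take the symmetric matrix with rows $(0,1,0,0)$, $(1,100,10,1)$, $(0,10,1,0)$, $(0,1,0,0)$ in the basis $1,X,Y,Z$. Its characteristic polynomial is $t(t^3-101t^2-2t+2)$, whose cubic factor is irreducible with roots approximately $101.02$, $0.131$, $-0.151$. Its Perron vector $(1,d,10d/(d-1),1)$ gives $E\approx104<\tfrac72\sqrt D$, and $N_{XX}^X=100$. Everything your plan uses about $N_X$ holds for this matrix, so any contradiction must come from the other fusion matrices and characters.

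The missing idea is the rational character, and the argument runs as follows.
\begin{enumerate}
\item Here $N_X$ has exactly three nonzero eigenvalues, namely the conjugates of $d$. Each character in the Frobenius--Perron orbit takes such a value at $X$. Hence that orbit has exactly three members, $R\otimes\Q\cong\Q\times K$ with $K$ cubic, and the fourth character $\rho$ is integer-valued with $\rho(X)=0$.
\item The codegree of $\rho$ lies in $(2,4)$, so it equals $3$. Orthogonality with $\FPdim$ then forces $\rho=(1,0,1,-1)$ after relabelling, i.e.\ $z=y+1$.
\item Evaluating $\rho$ on products gives $XY=bX+cY+cZ$, $Y^2=1+cX+sY+sZ$ and $YZ=cX+sY+(s+1)Z$. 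The trace of $N_Y$ (not of $N_X$) then gives $\Tr_{K/\Q}(y)=b+2s$.
\item The concentration bounds yield $c\le1$ and $0\le y-b<1$. At the two other embeddings, the codegree bound $f<4$ applied to $u^2=1+cv+s(2u+1)$ forces $y',y''<-1/2$. Thus $2s<0$, a contradiction.
\end{enumerate}

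Your degree-two case also ends incorrectly. The Schur-complement step gives $XY=bX$ and $XZ=cX$, so $y=b$, $z=c$ and $dd'=-E$. The condition $|d'|<1/2$ then only yields $E<d/2$, which is no contradiction. What closes the case is the codegree bound $1+(d')^2+b^2+c^2<4$ for the conjugate Frobenius--Perron character, which forces $b=c=1$. Then $Y$ and $Z$ are invertible, cancellation forces $YZ=X$, and this gives $D=4$.
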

\begin{proof}
Suppose $D\geq3600$, put $t=\sqrt D$, and let $X$ be the largest simple supplied by Lemma~\ref{lem:short-concentration}. Write the basis as $1,X,Y,Z$, with dimensions $1,x,y,z$, and put
\[
 K=\Q(x,y,z),\qquad r=[K:\Q],\qquad E=D-x^2.
\]

\smallskip\noindent
\emph{The rank of $N_X$ is $r$.}
The rational semisimple commutative algebra $R\otimes\Q$ is a product of number fields. Its character orbits are the embeddings of those fields, and the Frobenius--Perron orbit corresponds to $K$. If a character $\chi$ lies outside this orbit, each embedding of
$\Q(\chi(X))$ extends to its character field. Thus all conjugates of
$\chi(X)$ occur in the same non-Frobenius--Perron character orbit and
have absolute value less than one by \eqref{eq:short-small-eigenvalues}.
The norm of a nonzero algebraic integer has absolute value at least one,
so $\chi(X)=0$. On the Frobenius--Perron orbit the values are embeddings of the nonzero element $x\in K$, and are nonzero. Simultaneous diagonalization identifies these
character values, with their multiplicities, with the eigenvalues of
$N_X$. There are exactly $r$ embeddings in the Frobenius--Perron orbit,
so exactly $r$ eigenvalues are nonzero. Hence
\begin{equation}\label{eq:rank-degree}
 \operatorname{rank} N_X=r.
\end{equation}
The principal block of $N_X$ on $1,X$ is
$\left(\begin{smallmatrix}0&1\\1&a\end{smallmatrix}\right)$, so its rank is at least two. Its determinant vanishes by Lemma~\ref{lem:short-concentration}. Thus $r=2$ or $3$.

\smallskip\noindent
\emph{Rank two is impossible.}
Write
\[
 N_X=\begin{pmatrix}
 0&1&0&0\\
 1&a&b&c\\
 0&b&e&h\\
 0&c&h&k
 \end{pmatrix}.
\]
The upper left block is invertible and its Schur complement is exactly the lower right block. Rank two therefore gives $e=h=k=0$. Consequently $XY=bX$ and $XZ=cX$, so $y=b$ and $z=c$ are positive integers. The second character in the Frobenius--Perron orbit has codegree
\[
 1+(x')^2+b^2+c^2<4.
\]
It follows that $b=c=1$. Thus $Y$ and $Z$ are distinct self-dual invertible objects. Their product is a fourth invertible basis element, necessarily $X$: it cannot be $1,Y$, or $Z$ by cancellation. This gives $D=4$, a contradiction.

\smallskip\noindent
\emph{Rank three is impossible.}
Now $R\otimes\Q\cong\Q\times K$ with $[K:\Q]=3$. The remaining character $\rho$ is integer-valued and has codegree strictly between two and four, hence equal to three. Since $\rho(X)=0$, its values on $Y,Z$ are each $\pm1$. Orthogonality with the positive character excludes equal signs. Relabeling gives
\begin{equation}\label{eq:cubic-rational}
 \rho=(1,0,1,-1),\qquad z=y+1.
\end{equation}
Applying $\rho$ to products and using Frobenius reciprocity gives
nonnegative integers $b,c,s$ such that
\begin{align}
 XY&=bX+cY+cZ, &Y^2&=1+cX+sY+sZ,\label{eq:cubic-products}\\
 YZ&=cX+sY+(s+1)Z.\notag
\end{align}
The trace of $N_Y$ is $b+2s+1$. Subtracting $\rho(Y)=1$ gives
\begin{equation}\label{eq:cubic-trace}
 \Tr_{K/\Q}(y)=b+2s.
\end{equation}

By \eqref{eq:short-dominance},
\[
 E=1+y^2+(y+1)^2<\frac72t,
 \qquad x^2=t^2-E>(t-4)^2.
\]
The dimension equation for $Y^2$ gives $cx<y^2<E/2$, whence
\[
 c<\frac{7t}{4(t-4)}<2.
\]
Thus $c\leq1$. The equation for $XY$ gives
\[
 \delta:=y-b=\frac{c(2y+1)}x\geq0,\qquad
 \delta^2\leq\frac{2E-3}{D-E}<\frac7{t-7/2}<1.
\]
For either nonidentity embedding of $K$, write $u=y'$ and $v=x'$.
By \eqref{eq:short-small-eigenvalues}, $|v|<1/2$. Applying the embedding
to the equation for $Y^2$ gives
\[
 u^2=1+cv+s(2u+1).
\]
If $u\geq-1/2$, then $c\leq1$ and $s\geq0$ give $u^2>1/2$,
hence $u>1/\sqrt2$. Its formal codegree would satisfy
\[
 f'=1+v^2+u^2+(u+1)^2>3+\sqrt2>4,
\]
contrary to \eqref{eq:short-codegrees}. Thus $y',y''<-1/2$, and
\eqref{eq:cubic-trace} gives
\[
 2s=(y-b)+y'+y''<1-1=0,
\]
a contradiction. This completes the proof.
\end{proof}

\begin{proof}[Proof of Theorem~\ref{thm:uniform}]
Put $R=K_0(\mathcal C)$ for a pivotal categorification $\mathcal C$. If it has a non-self-dual simple,
Lemma~\ref{lem:nonselfdual} gives $D<45$. Otherwise,
Lemma~\ref{lem:exceptional} either gives $D<576$ or supplies a
pseudo-unitary Galois conjugate with the same fusion ring.
Proposition~\ref{prop:pu} then gives $D<3600$.
The preliminary bound $\mu(R)^2\leq D$ implies $\mu(R)<60$, hence
$\mu(R)\leq59$ since fusion coefficients are integers.
\end{proof}

%% file: source/criteria.tex
\section{The finite classification}\label{sec:established}
After Theorem~\ref{thm:uniform}, only a finite collection of fusion rings
remains. We apply inexpensive structural and integer-arithmetic tests
before the character-field and induction calculations.

\subsection{Arithmetic and structural restrictions}
We use the following published restrictions without further proof.

\begin{proposition}\label{prop:published}
\begin{enumerate}
\item \emph{Formal-codegree $d$-numbers.}
Every formal codegree of a categorifiable fusion ring is a $d$-number
\cite[Theorem~1.2]{OstrikCodegrees}. An algebraic integer is a
$d$-number when its principal ideal in the ring of all algebraic
integers is Galois invariant. For minimal polynomial
$t^e+a_1t^{e-1}+\cdots+a_e$, this is equivalent to
$a_e^j\mid a_j^e$ for $1\leq j\leq e$
\cite[Lemma~2.7(v)]{OstrikCodegrees}.
\item \emph{Rank-two subring classification.} A rank-two fusion subring $\Z\{1,x\}$ with $x^2=1+ax$ is
categorifiable only for $a=0,1$ \cite[Main Theorem, p.~178]{OstrikRank2}.
\item \emph{Published rank-four families.}
If the simple Frobenius--Perron dimensions take exactly two values,
any categorification has a pseudo-unitary Galois conjugate
\cite[Lemma~3.1]{Schopieray}. Hence the following restrictions are
necessary for categorifiability:
\begin{enumerate}[label=(\roman*)]
\item The near-group ring $\Near(C_3,m)$, with basis
$C_3\sqcup\{x\}$ and multiplication
$gx=xg=x$, $x^2=\sum_{g\in C_3}g+mx$, requires
$m\in\{0,2,3,6\}$ \cite[Theorem~1.1]{Larson}.
\item For the ring with basis $\{1,g,x,y\}$ and
\[
 g^2=1,\quad gx=y,\quad x^*=y,\quad x^2=y^2=g+m(x+y),
\]
Larson's theorem requires $m\leq2$.
\item For the self-dual ring with basis $\{1,g,x,y\}$ and
\[
 g^2=1,\quad gx=y,\quad x^2=1+ax+by,
\]
the pseudo-unitary classification in \cite[Theorem~2.4]{EIP} requires
\[
 (a,b)\in\{(0,0),(0,1),(1,0),(1,1),(2,2)\}.
\]
\end{enumerate}
The pointed boundary cases satisfy these restrictions directly.
\end{enumerate}
\end{proposition}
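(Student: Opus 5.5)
The plan is to treat each item as a direct consequence of the cited result. The only real work is checking that the hypotheses of each cited theorem are met in the form used here. For item (1) I would take a categorification $\mathcal C$ and cite \cite[Theorem~1.2]{OstrikCodegrees}: the formal codegrees of $K_0(\mathcal C)$ are $d$-numbers. I would then record the stated coefficient criterion from \cite[Lemma~2.7(v)]{OstrikCodegrees}. No reduction to pseudo-unitarity is needed, so this item is immediate.

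For item (2), a rank-two fusion subring $\Z\{1,x\}$ of a categorifiable ring is the Grothendieck ring of the fusion subcategory generated by the simple $x$. That subcategory is a full rigid semisimple subcategory closed under tensor products, since $x^2=1+ax$. Ostrik's rank-two classification \cite{OstrikRank2} then gives $a\in\{0,1\}$. The one point to state explicitly is that a based subring of $K_0(\mathcal C)$ spanned by simples is exactly $K_0$ of a fusion subcategory. This is standard.

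For item (3), I would first check that in each of the three families the simple Frobenius--Perron dimensions take exactly two values (outside the pointed boundary cases):
\begin{enumerate}[label=(\roman*)]
\item In $\Near(C_3,m)$ the dimensions are $1,1,1,x$.
\item In the second family $g$ is invertible and $gx=y$ forces $\FPdim(x)=\FPdim(y)$.
\item The same holds in the third family.
\end{enumerate}
So whenever the ring is not pointed, \cite[Lemma~3.1]{Schopieray} supplies a Galois conjugate of any categorification that is pseudo-unitary. Galois conjugation preserves the fusion ring, as in Lemma~\ref{lem:galois-reduction}. It therefore suffices to apply the pseudo-unitary classifications:
\begin{enumerate}[label=(\roman*)]
\item Larson's list \cite[Theorem~1.1]{Larson} gives $m\in\{0,2,3,6\}$ for the near-group ring, which is non-self-dual since $C_3$ is.
\item The second family is also non-self-dual, because $x^*=y$. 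Matching it with Larson's candidates $1,1,c+\sqrt{c^2+1},c+\sqrt{c^2+1}$ (with the $C_4$ case) yields $m\leq2$.
\item For the third, self-dual $\Z/2\Z$-quadratic family, \cite[Theorem~2.4]{EIP} gives the listed pairs $(a,b)$.
\end{enumerate}

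The main obstacle is bookkeeping: identifying each family's multiplication with the normal forms in Larson's and Edie-Michell--Izumi--Penneys' statements. In the second family this means solving $x^2=g+m(x+y)$ for $\FPdim(x)$ and matching it against the parameter $c$. In the third it means matching conventions for which of $x,y$ carries $a$ versus $b$. I would also dispose separately of the degenerate pointed cases:
\begin{enumerate}[label=(\roman*)]
\item In the near-group family, $m=0$ is not pointed but lies in the allowed set anyway.
\item In the second family, $m=0$ gives $\Z[C_4]$.
\item In the third family, $(a,b)=(0,0)$ gives $\Z[C_2\times C_2]$.
\end{enumerate}
Schopieray's lemma is not needed in these pointed cases, and each one satisfies the stated restriction directly.
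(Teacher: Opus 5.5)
Your proposal is correct and matches the paper, which lists these restrictions as published results and uses them ``without further proof.'' Your additions are the implicit hypothesis checks: the fusion-subcategory step for Ostrik's rank-two theorem, the two-valued dimensions and non-self-duality needed to invoke Schopieray and Larson, the identification $\FPdim(x)=m+\sqrt{m^2+1}$ with Larson's parameter $c$, and the pointed cases $\Z[C_4]$ and $\Z[C_2\times C_2]$.
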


The filters are applied in the order recorded in Table~\ref{tab:filters}.

%% file: source/quadratic.tex
\subsection{A quadratic twist obstruction}\label{sec:quadratic}

\begin{lemma}\label{lem:quadratic-length}
If eighteen roots of unity have sum $-2-2\sqrt{10}$, the sum of their
squares has normalized rational trace $2$.
\end{lemma}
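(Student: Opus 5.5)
The plan is to reduce to a rigidity statement: any $18$-term representation of $-2-2\sqrt{10}$ by roots of unity must be, up to harmless rearrangement, the obvious Gauss-sum representation, whose squares are then computed directly. Throughout, $T$ denotes the normalized trace $T(\beta)=\Tr_{K/\Q}(\beta)/[K:\Q]$, which does not depend on the number field $K\ni\beta$. Recall that $T(\zeta)=\mu(m)/\varphi(m)$ for a primitive $m$-th root of unity $\zeta$, and that $T(\zeta^2)$ is the same expression at order $m/\gcd(m,2)$.

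\emph{The model.} From $\sqrt5=\zeta_5-\zeta_5^2-\zeta_5^3+\zeta_5^4$ and $\sqrt2=\zeta_8+\zeta_8^{-1}$ we obtain $2\sqrt{10}$ as a sum of $16$ signed roots of unity $\pm\zeta_5^a\zeta_8^{\pm1}$. Together with $-1,-1$, this gives $18$ roots of unity summing to $-2-2\sqrt{10}$. Their squares are $1,1$ and sixteen elements $\zeta_5^{2a}\zeta_4^{\pm1}$, each a primitive $20$-th root of unity. Since $\mu(20)=0$, these sixteen have normalized trace zero, so the sum of squares has normalized trace $2$. It therefore suffices to show that every representation has the same multiset of normalized traces of squares as the model.

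\emph{Rigidity via quadratic moments.} Write $\alpha=\sum_{i=1}^{18}\zeta_i$. Two moment identities constrain the representation:
\begin{itemize}
\item $T(\alpha)=\sum_i T(\zeta_i)=-2$.
\item $T(|\alpha|^2)=\tfrac12\bigl((2+2\sqrt{10})^2+(2-2\sqrt{10})^2\bigr)=44$.
\end{itemize}
Expanding the second gives $44=18+\sum_{i\neq j}T(\zeta_i\overline{\zeta_j})$. Next, work in $\Q(\zeta_N)$ with $40\mid N$, which is forced because $\Q(\sqrt{10})$ has conductor $40$. Decompose $N=2^a5^b n'$ with $\gcd(n',10)=1$, and split $\alpha$ by cosets of $\mu_{40}$ in the standard Loxton--Cassels way. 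Averaging over $\mathrm{Gal}(\Q(\zeta_N)/\Q(\zeta_{40}))$ should force any component with coset representative outside $\mu_{40}$ (and not in a vanishing subsum) to increase the count beyond $18$. The comparison is: $2\sqrt{10}$ needs at least $16$ roots of unity, by a norm/length count in $\Q(\zeta_{40})$ analogous to Cassels' bound, and $-2$ needs at least $2$. Vanishing subsums can be excluded outright, because a vanishing subsum of length $\ell\geq2$ would leave at most $18-\ell$ roots for $-2-2\sqrt{10}$, below the minimum $18$.

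\emph{Matching the model and the main obstacle.} Once every $\zeta_i$ lies in $\mu_{40}$ and the representation is minimal, the remaining work is a finite check inside $\Q(\zeta_{40})$. I would express the sum in the $\Z$-basis adapted to $\Q(\zeta_8)\otimes\Q(\zeta_5)$ and show that the only length-$18$ solutions are the model up to the symmetries $\zeta_5\mapsto\zeta_5^{\pm1}$, $\zeta_8\mapsto\pm\zeta_8^{\pm1}$ and sign choices. This check could be done by hand from $T(\alpha)=-2$ and the order distribution, or by exact computation. Those symmetries preserve the orders of the squares, so the conclusion $T=2$ follows.

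I expect the minimal-length step to be the main obstacle: proving that $2\sqrt{10}$ cannot be written with fewer than $16$ roots of unity, and that $18$ terms leave no room for extra primes or vanishing subsums. I would first try the mean-square identity $T(|\alpha|^2)=44$. Minimality of the length should be equivalent to the cross terms $T(\zeta_i\overline{\zeta_j})$ realizing exactly $26$, and each prime $p\nmid 10$ present in some order contributes cross terms with normalized trace $\ge -1/(p-1)$. That should make the surplus incompatible with the remaining budget.
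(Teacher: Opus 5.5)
Your model is correct, and so is the reduction to showing that every eighteen-term representation has the same normalized traces of squares. But the step that carries the whole lemma is not proved, and the route you sketch does not close. That step is showing that eighteen is the minimal length and that a representation of that length uses only $-1$ and primitive $40$th roots.

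\emph{The additive count.} The count ``$2\sqrt{10}$ needs $16$ roots and $-2$ needs $2$'' gives no lower bound for $-2-2\sqrt{10}$. Length is subadditive, not superadditive, and a representation of the sum need not contain subsums equal to $-2$ and $-2\sqrt{10}$. For instance, $1+\sqrt5=-2\zeta_5^2-2\zeta_5^3$ has length four, although $\sqrt5$ alone already needs four roots.

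\emph{Circularity.} You exclude vanishing subsums and roots outside $\mu_{40}$ by appealing to ``the minimum $18$''. Your proposed proof of that minimum, however, takes place inside $\Q(\zeta_{40})$, so the argument is circular.

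\emph{Mean square.} The mean-square data does not supply the sharp bound either. The identity $\sum_{i\neq j}T(\zeta_i\overline{\zeta_j})=26$ holds automatically for every eighteen-term representation, so on its own it cannot detect minimality. Estimates of the form $T(|\alpha|^2)\leq k^2$ or $2+2\sqrt{10}\leq k$ only force $k\geq9$. The final matching with the model is also left to an unperformed check.

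The missing idea is to average over Galois all the way down to $K=\Q(\sqrt{10})$ and to bound each root individually. Put $T_K=[L:K]^{-1}\Tr_{L/K}$ on a cyclotomic field $L\supseteq K$ containing the roots, and let $\xi$ be a root of order $m$. When $40\nmid m$, one has $T_K(\xi)=\mu(m)/\varphi(m)$. When $m=40h$, the Gauss sum of the conductor-$40$ character gives $T_K(\xi)=0$ if $\gcd(40,h)>1$, and $T_K(\xi)=\pm\mu(h)\sqrt{10}/(8\varphi(h))$ otherwise. Since $\mu(40)=0$, every root contributes to only one coordinate of $a+b\sqrt{10}$. This is exactly what your $16+2$ count required; it fails for $\Q(\sqrt5)$, where $\mu(5)\neq0$, which explains the example above.

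The functional $\ell(a+b\sqrt{10})=-a-8b$ then satisfies $\ell(T_K(\xi))\leq1$, with equality only for $\xi=-1$ or $\xi$ primitive of order $40$. On the other hand, $\ell(-2-2\sqrt{10})=18$, so all eighteen roots are extremal. The rational trace $-2$ forces exactly two of them to equal $-1$. The squares then have normalized traces $1,1$ and sixteen copies of $\mu(20)/\varphi(20)=0$.

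This is the paper's proof. It uses only the orders of the roots, so no Loxton--Cassels decomposition, vanishing-subsum analysis, or matching with the model is needed. It also yields your rigidity statement: the eight primitive $40$th roots with $T_K(\xi)=-\sqrt{10}/8$ are linearly independent over $\Q$ and sum to $-\sqrt{10}$, so the representation is unique as a multiset.
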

\begin{proof}
For a number in a finite extension $L/\Q$, normalized rational trace
means $[L:\Q]^{-1}\Tr_{L/\Q}$; it is unchanged on enlarging $L$.
Choose a cyclotomic field $L$ containing the roots and $K=\Q(\sqrt{10})$.
Let $T=[L:K]^{-1}\Tr_{L/K}$ and
$\ell(a+b\sqrt{10})=-a-8b$.
We claim that $\ell(T(\xi))\leq1$ for every root of unity $\xi$, with
equality possible only when $\xi=-1$ or $\xi$ is primitive of order $40$.

Write $m$ for the order of $\xi$. We denote the M\"obius function by
$\mu$ and Euler's totient function by $\varphi$; here $\mu$
is not to be confused with the fusion multiplicity $\mu(R)$.
Since $K$ has conductor $40$,
if $40\nmid m$ then $K\cap\Q(\xi)=\Q$, and
$T(\xi)=\mu(m)/\varphi(m)$. This gives the claim, with equality
only for $m=2$. If $m=40h$, the quadratic Gauss sum for the primitive
character of conductor $40$ gives
\[
 T(\xi)=0\quad\text{if }(40,h)>1,\qquad
 T(\xi)=\frac{\pm\mu(h)\sqrt{10}}{8\varphi(h)}
       \quad\text{if }(40,h)=1.
\]
Indeed, split the units modulo $m$ according to this quadratic character.
The untwisted sum is $\mu(m)=0$; the twisted sum vanishes on an extra
prime-power layer above $40$, and otherwise the Chinese remainder theorem
reduces it to $\pm\mu(h)\sqrt{40}$. Thus $|\ell(T(\xi))|\leq1$;
equality requires $(40,h)=1$ and $\varphi(h)=1$, hence $h=1$.

The value of $\ell$ on the prescribed sum is $18$, so equality holds
for each of the eighteen roots. A primitive fortieth root and its square
have normalized rational traces
$\mu(40)/\varphi(40)=\mu(20)/\varphi(20)=0$.
The original sum has rational trace $-2$, so exactly two roots are $-1$.
Their squares contribute $2$, proving the assertion.
\end{proof}

For $n\geq1$, define $R_n$ in the self-dual basis $(1,X,Y,Z)$ by putting
$W=X+Y+Z$ and
\[
\begin{array}{lll}
 X^2=1+nW-X-Y,&XY=nW-X,&XZ=nW,\\
 Y^2=1+nW,&YZ=nW+Z,&Z^2=1+nW+Y+Z.
\end{array}
\]
Its Frobenius--Perron dimension vector is
$(1,\delta-1,\delta,\delta+1)$, where
$\delta=(3n+\sqrt{9n^2+4})/2$; in particular
$\delta^2=3n\delta+1$ and $D=\operatorname{FPdim}(R_n)=6+9n\delta$.

\begin{proposition}\label{prop:quadratic-twist}
Write $9n^2+4=t^2d$ with $d$ squarefree. If $n=2$, or if
\begin{equation}\label{eq:quadratic-numerical-obstruction}
 \frac{nt}{2}\varphi(2d)>4n^2+2,
\end{equation}
then $R_n$ has no pivotal categorification over $\C$.
In particular, this excludes $R_n$ for $2\leq n\leq11$.
\end{proposition}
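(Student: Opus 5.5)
We will use the indicator formula \eqref{eq:indicator-trace} for a suitable simple, applied after reducing to the pseudo-unitary case. All simples of $R_n$ are self-dual, and the dimensions $\delta-1,\delta,\delta+1$ are distinct, so we cannot invoke Proposition~\ref{prop:published}(3) directly. Instead, $R_n\otimes\Q\cong K\times\Q\times\Q$ or a similar splitting with $K=\Q(\sqrt{d})$. We will compute the characters explicitly and check, via the reciprocal-codegree identity and Lemma~\ref{lem:exceptional}'s argument (or directly by Ostrik's inequality \eqref{eq:spherical-mass}), that the spherical dimension character must lie in the Frobenius--Perron orbit. Lemma~\ref{lem:galois-reduction} then gives a pseudo-unitary Galois conjugate with the same fusion ring. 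From here on we work in that pseudo-unitary category with its canonical spherical structure.

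Next we determine $I(1)$. Its constituents have dimensions $D/f_j$, and their restriction columns are forced by \eqref{eq:low-columns}. We then pick the simple, presumably $Y$ (dimension $\delta$) or $X$, and compute $W_i$ together with the remaining multiplicities in $I(X_i)$ from $B$. The aim is to show that the part of $I(X_i)$ outside $I(1)$ consists of constituents whose dimensions are forced, leaving a total twist-weighted sum that is a sum of roots of unity with prescribed value. Concretely, the first-moment equation $\sum_A m_A\theta_A=0$ (since $\nu_1=0$) together with the known mass $W_i$ at twist one pins down $\sum \theta_A$ over a block of constituents of equal dimension.

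For $n=2$ we have $9n^2+4=40$, so $K=\Q(\sqrt{10})$. We expect that block to consist of eighteen constituents whose twists sum to $-2-2\sqrt{10}$. Lemma~\ref{lem:quadratic-length} then fixes the normalized trace of $\sum\theta_A^2$ at $2$. Comparing this with the second-moment identity $\sum m_A\theta_A^2=D\nu_2$, with $\nu_2=\pm1$, and taking normalized traces should yield an impossible rational equation.

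For general $n$ we use the same setup, but the twist sum lies in $\Q(\sqrt d)$ with conductor dividing $4d$. We bound the number of roots of unity needed to represent the prescribed sum. A root $\xi$ contributes at most about $1/\varphi(\cdot)$ to the $\sqrt d$-coefficient after the trace $T$, with the best case being order of conductor $2d$ or $4d$. So $N$ roots can produce a $\sqrt d$-coefficient of size at most roughly $N\cdot(\text{const})/\varphi(2d)$. The required coefficient is of size about $nt/2$ times a normalization, while the available count is $N\le 4n^2+2$ from the multiplicity data. This gives exactly \eqref{eq:quadratic-numerical-obstruction}. The final claim for $3\le n\le11$ is a direct numerical check of \eqref{eq:quadratic-numerical-obstruction}.

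The main obstacle is the bookkeeping in the middle step. We must identify the central constituents of $I(X_i)$ outside $I(1)$ precisely enough to know their count, $4n^2+2$ in general and $18$ at $n=2$, and their common dimension. That is what converts the indicator identity into a sum-of-roots-of-unity equation. We would first try to read this off from $Q=B-LL^{\mathsf T}$ as in the proof of Proposition~\ref{prop:low-test}.
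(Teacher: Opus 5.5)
\paragraph{Verdict.} Your outline follows the paper's architecture: reduce to a pseudo-unitary conjugate, determine $I(1)$, use a first twist-trace identity for $X$ to obtain a sum of roots of unity in $\Q(\sqrt d)$, and finish with Lemma~\ref{lem:quadratic-length} and the second identity at $n=2$. However, the step you defer as bookkeeping is where the proof actually happens, and the route you sketch for it does not close.

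\paragraph{The $I(1)$ columns are not forced by \eqref{eq:low-columns}.} Matching the coefficients of $1$ and $\delta$ in $d^{\mathsf T}v=D/f$ only gives the families $(1,a,3n-2a,a)$, $(1,b,3n-1-2b,b+1)$ and $(1,c,3n-1-2c,c+1)$, with $a+b+c=3n-1$. The missing idea is a test vector. For $v=(0,1,-2,1)^{\mathsf T}$ one has $v^{\mathsf T}Bv=18$, and since $B=MM^{\mathsf T}$ this equals the sum of $(v\cdot m)^2$ over \emph{all} restriction columns $m$. The $I(1)$ columns alone contribute $36(a-n)^2+(6(b-n)+3)^2+(6(c-n)+3)^2\geq18$. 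This forces $a=n$ and $\{b,c\}=\{n-1,n\}$, and equality forces $v$ to be orthogonal to every remaining column. Only this rigidity gives the shape $(0,b-u,b,b+u)$ of the other columns and the Gram identities $\sum b^2=6n^2+2$, $\sum bu=2n$, $\sum u^2=2$. Reading the structure off $Q$ ``as in Proposition~\ref{prop:low-test}'' is not available here, because that argument needs an invertible involution and $R_n$ has none.

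\paragraph{The decisive gap: two columns with unknown twists.} The identity $\sum u^2=2$ leaves two columns with $u=\pm1$, and these are not of the form $(0,b,b,b)$. They occur in $I(X)$ with multiplicity $b-u$ and dimension $3b\delta+2u$, which is not a rational multiple of $\delta$, and their twists are unknown. So the first-moment identity for $X$, even with $W_X$ known, contains their unknown contribution. It therefore does not isolate a sum of roots of unity in $\Q(\sqrt d)$.

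You need a separate argument that pins down these twists:
\begin{enumerate}
\item Take the difference of the first twist-trace identities for $Z$ and $X$.
\item Exclude opposite signs of $u$ by the reverse triangle inequality, and two negative signs by $\sum bu=2n$.
\item This gives $p+q=2n$ and $\zeta\eta^{-1}+\eta\zeta^{-1}=2-3(9n^2+4)/(9pq+4)$, a rational algebraic integer in $[-2,-1]$.
\item Rule out the value $-2$ modulo $9$, and conclude $p=q=n$ and $\zeta+\eta=-1$.
\end{enumerate}
Only then does the identity for $X$ become $3\delta S=-n(1+3n\delta)-nT_0$. Here $S=\sum b^2\theta$ runs over the $(0,b,b,b)$ columns: it is a sum of $4n^2+2$ roots counted with repetition, not a block of constituents of equal dimension. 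Likewise, only with $\zeta^2+\eta^2=-1$ does the second identity give $S_2=S+\epsilon D/(3\delta)$, which is what drives the $n=2$ contradiction.

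\paragraph{Smaller points.} The pseudo-unitary reduction is immediate: the rational characters $(1,0,1,-1)$ and $(1,1,-1,0)$ vanish on a simple, so neither can be a spherical dimension character. Ostrik's inequality alone would not exclude them, since it holds at $\Delta=3$.

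Your Gauss-sum counting for general $n$ is a valid self-contained substitute for the paper's citation of Larson's root-length theorem. The $\sqrt d$-coefficient of the normalized trace to $\Q(\sqrt d)$ of any root of unity has absolute value at most $1/\varphi(2d)$, whether the conductor is $d$ or $4d$. Hence $N$ roots summing to $-n^2/2-(nt/2)\sqrt d$ require $N\geq(nt/2)\varphi(2d)$. With $N=4n^2+2$ this is exactly the negation of \eqref{eq:quadratic-numerical-obstruction}, and it avoids the duplication step.
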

\begin{proof}
Since all simple objects are self-dual, a pivotal categorification is
spherical. The four characters of $R_n$ are
\[
 (1,\delta-1,\delta,\delta+1),\quad
 (1,\delta'-1,\delta',\delta'+1),\quad
 (1,0,1,-1),\quad(1,1,-1,0),
 \qquad \delta'=-\delta^{-1}.
\]
The two rational characters vanish on a simple, so
Lemma~\ref{lem:galois-reduction} gives a pseudo-unitary Galois conjugate.
Use its canonical spherical structure.
The formal codegrees are $D,D'=3+3(\delta')^2,3,3$; hence
\[
 I(1)=1\oplus A_1\oplus A_2\oplus A_3,\qquad
 d_{A_1}=1+3n\delta,\quad d_{A_2}=d_{A_3}=2+3n\delta,
\]
with all four twists equal to one.

Use the restriction matrix $M$ of the forgetful functor
$F:\mathcal Z(\mathcal C)\to\mathcal C$, introduced in
Section~\ref{sec:central}, and the Casimir matrix $B$ defined in
\eqref{eq:casimir}. By \eqref{eq:low-gram}, $MM^{\mathsf T}=B$.
Comparison of the coefficients of $1,\delta$ forces the columns for
$A_1,A_2,A_3$ to be
\[
 (1,a,3n-2a,a),\quad
 (1,b,3n-1-2b,b+1),\quad
 (1,c,3n-1-2c,c+1).
\]
The first row gives $a+b+c=3n-1$.
For $v=(0,1,-2,1)^{\mathsf T}$, direct multiplication gives
$v^{\mathsf T}Bv=18$; its values on these three columns are
$6(a-n),6(b-n)+3,6(c-n)+3$.
Their squared sum is at most $18$, forcing
$a=n$ and $\{b,c\}=\{n-1,n\}$.
Thus, up to interchanging $A_2,A_3$,
\[
 F(A_1)=(1,n,n,n),\quad F(A_2)=(1,n-1,n+1,n),\quad
 F(A_3)=(1,n,n-1,n+1).
\]
Equality also forces $v$ to vanish on every remaining column.
These columns therefore have the form $(0,b-u,b,b+u)$,
where $b\geq|u|$ and $b>0$, and have dimension $3b\delta+2u$.
Subtracting the four displayed columns from $B$ gives
\begin{equation}\label{eq:quadratic-gram}
 \sum b^2=6n^2+2,\qquad \sum bu=2n,\qquad \sum u^2=2.
\end{equation}
There are precisely two columns with $u\ne0$, each with $u=\pm1$.
Put $T_0=3n\delta+2$. The difference of the first twist-trace identities
for $Z$ and $X$, divided by two, reads
\begin{equation}\label{eq:quadratic-first-trace}
 \sum u(3b\delta+2u)\theta=-T_0.
\end{equation}
If the signs were opposite, write the two columns with parameters
$(b,u)=(p,1),(q,-1)$. Then $p-q=2n$ by
\eqref{eq:quadratic-gram}, so their positive dimensions differ by
$3(p-q)\delta+4=2T_0$. The left side of
\eqref{eq:quadratic-first-trace} would therefore have absolute value
at least $2T_0$, whereas its right side has absolute value $T_0$.
This contradicts the reverse triangle inequality. Both signs cannot be negative
by \eqref{eq:quadratic-gram}; hence they are positive.
Write their $b$ values as $p,q$, so $p+q=2n$, and their twists as
$\zeta,\eta$. Their dimensions $\alpha,\beta$ satisfy
$\alpha+\beta=2T_0$ and
$\alpha\beta=(9pq+4)\delta^2$, whereas
$T_0^2=(9n^2+4)\delta^2$. Equation~\eqref{eq:quadratic-first-trace} yields
\[
 \zeta\eta^{-1}+\eta\zeta^{-1}
 =2-\frac{3(9n^2+4)}{9pq+4}.
\]
This rational algebraic integer belongs to $[-2,-1]$.
The value $-2$ would imply $36pq=27n^2-4$, impossible modulo $9$.
The value $-1$ forces $p=q=n$ and $\zeta+\eta=-1$;
thus $\zeta,\eta$ are the primitive cube roots.

The other columns have the form $(0,b,b,b)$ with
$\sum b^2=4n^2+2$. Their twists consequently give a sum
$S=\sum b^2\theta$ of $4n^2+2$ roots of unity, with repetitions.
The first twist-trace identity for $X$ reduces to
\[
 0=n(1+3n\delta)+nT_0+3\delta S,
 \qquad
 S=n^2-n\delta=-\frac{n^2}{2}-\frac{nt}{2}\sqrt d.
\]
Duplicating these roots clears the denominators in $S$.
Larson's root-length theorem \cite[Theorem~6.2]{Larson} therefore gives
$4n^2+2\geq (nt/2)\varphi(2d)$, contradicting
\eqref{eq:quadratic-numerical-obstruction}. For $n=3,\ldots,11$ the
right-hand side of this lower bound is, respectively,
\[
 96,\ 144,\ 570,\ 480,\ 1232,\ 896,\ 3294,\ 2240,\ 6006,
\]
each strictly larger than $4n^2+2$.

For $n=2$, $S=-2-2\sqrt{10}$ is a sum of eighteen roots.
Lemma~\ref{lem:quadratic-length} says that their squared sum has
normalized rational trace $2$.
Since $X$ is self-dual, its second indicator is
$\epsilon\in\{1,-1\}$. The second twist-trace identity, using
$\zeta^2+\eta^2=-1$, gives for the squares of those same eighteen roots
\[
 S_2=S+\epsilon\frac{D}{3\delta}
     =-2+(2\epsilon-2)\sqrt{10}.
\]
Its normalized rational trace is $-2$ for either value of $\epsilon$,
a contradiction.
\end{proof}

In this family $6+27n^2<D<9+27n^2$; consequently $R_2,\ldots,R_{11}$
are precisely its obstructed parameters in the window $D<3600$.

%% file: source/completion.tex
\subsection{Exact census and completion}\label{sec:completion}
Appendix~\ref{app:census} proves that the supplementary census contains
every rank-four fusion ring with $D<3600$, exactly once. Its $8977$
full multiplication tensors, the distribution by multiplicity, and the
certificate ledger are supplied in \texttt{supplementary}.
From the directory containing \texttt{supplementary}, the complete
verification command is
\begin{center}\small\texttt{python3 supplementary/check.py}\end{center}
In the arXiv source archive, this directory is \texttt{anc/}.
The command regenerates the census and certificates, checks their coverage and
all tensor identities using exact arithmetic, and reconstructs the
fifteen model tensors. No external census is needed. The production
programs share some routines; separate checks use spectral-adjugate
characters, resultant norms, independent finite-field tests, and full
integer boxes for restriction columns. Another program checks the
quadratic-family identities. The optional \texttt{--deep} flag also
removes the census pruning and checks an exhaustive small parameter box.
The supplementary guide states the dependencies and the precise scope
of each check; none is a proof-assistant certification of the
categorical arguments.

\input{source/generated_filters}

Every potentially spherical character orbit is tested against
nonvanishing, equality on duals, \eqref{eq:norm-test},
\eqref{eq:spherical-mass}, and formal-codegree field containment.
For an orbit whose global dimension is conjugate to $D$,
Lemma~\ref{lem:galois-reduction} also permits the positive-dimension
criterion of Proposition~\ref{prop:low-test}. The enumeration exhausts
all required restriction-column choices, up to permutations within
equal-codegree blocks. An exclusion is accepted only when every
character orbit, and every required column choice, has an actual
violated necessary condition.

\begin{proof}[Proof of Theorem~\ref{thm:main}]
Theorem~\ref{thm:uniform} and Proposition~\ref{prop:census-coverage}
place every possible based ring in the census.
Lemma~\ref{lem:spherical} reduces pivotal existence to spherical
existence. The exact certificates apply
Proposition~\ref{prop:published}, Proposition~\ref{prop:quadratic-twist},
the spherical-character tests, and Proposition~\ref{prop:low-test},
leaving precisely the fifteen rings in Table~\ref{tab:models}.
Each is identified by an explicit based-ring isomorphism with a model
tensor; the cited constructions provide unitary, hence pivotal,
categorifications. This proves both directions.
\end{proof}

\begin{corollary}\label{cor:sharp}
For every pivotal fusion category $\mathcal C$ of rank four,
\[
 \FPdim(\mathcal C)\leq24+12\sqrt3,
 \qquad \mu\bigl(K_0(\mathcal C)\bigr)\leq6.
\]
Equality in either bound occurs precisely for the near-group based ring
of type $(C_3,6)$.
\end{corollary}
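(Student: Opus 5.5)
By Theorem~\ref{thm:main}, the Grothendieck ring of a pivotal rank-four fusion category $\mathcal C$ is isomorphic, as a based ring, to one of the fifteen rings in Table~\ref{tab:models}. The invariants $\FPdim(\mathcal C)=\FPdim(K_0(\mathcal C))$ and $\mu(K_0(\mathcal C))$ depend only on the based ring. The plan is therefore a finite inspection: compute $D$ and $\mu$ for each of the fifteen rows, then check that both maxima are attained only at the near-group ring of type $(C_3,6)$.

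\emph{The dimension bound.} The $D$ column of Table~\ref{tab:models} lists
\[
4,\ 5+\sqrt5,\ 10,\ 8+4\sqrt2,\ \tfrac{15+5\sqrt5}{2},\ \delta_7,\ 4,\ 6,\ 8+4\sqrt2,\ 20+8\sqrt5,\ \tfrac{39+9\sqrt{13}}2,\ 12,\ \delta_E,\ \tfrac{21+3\sqrt{21}}2,\ 24+12\sqrt3 .
\]
The last value is $24+12\sqrt3\approx44.78$, and every explicit entry before it is smaller. The largest explicit competitors are $20+8\sqrt5\approx37.89$ and $(39+9\sqrt{13})/2\approx35.72$. For $\delta_7$ and $\delta_E$ I will verify the bound through the defining polynomials.
\begin{itemize}
\item[] For $\delta_7$: $p(x)=x^3-27x^2+162x-243$ satisfies $p(x)>0$ for all $x\geq 20$. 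Since $p(20)>0$ and $p$ is increasing there, the largest root is below $20$; in fact it is about $19.3$.
\item[] For $\delta_E$: evaluate $x^4-45x^3+375x^2-1125x+1125$ at $x=36$. The value is positive, and the quartic is increasing beyond that point, so $\delta_E<36$; numerically $\delta_E\approx35.3$.
\end{itemize}
These are exact sign checks of integer polynomials at a rational point, so no numerical approximation is needed. The inequality is strict in every row except the last, which gives the dimension bound together with its equality case.

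\emph{The multiplicity bound.} I will read off $\mu$ from the first column of Table~\ref{tab:models}. The values are at most $3$ in every row except near-group $(C_3,6)$, where $x^2=\sum g+6x$ gives $\mu=6$. So $\mu\le 6$, with equality exactly for that ring.

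The main obstacle is only the bookkeeping. The $\mu(R)$ column has to come from the model tensors, which the supplementary check reconstructs exactly, rather than be trusted from the table. The two algebraic entries $\delta_7,\delta_E$ must also be identified with the \emph{largest} real root of their polynomials. Finally, "precisely for the near-group ring" in the equality statement relies on Theorem~\ref{thm:main} being up to based-ring isomorphism: every $\mathcal C$ with equality has Grothendieck ring isomorphic to $\Near(C_3,6)$.
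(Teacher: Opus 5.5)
Your proposal is correct and is essentially the paper's proof: invoke Theorem~\ref{thm:main} and inspect the fifteen rows of Table~\ref{tab:models} for $D$ and $\mu$. The only difference is in bounding $\delta_7,\delta_E$. The paper substitutes $x=44+u$ to get polynomials with strictly positive coefficients, which rules out any root $\geq 44$ without a monotonicity argument. Your evaluations at $20$ and $36$ additionally need the routine check that the derivatives are positive beyond those points, which does hold. Incidentally, $\delta_7\approx19.23$ and $\delta_E\approx35.24$, slightly below your estimates, though these play no role.
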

\begin{proof}
Apply Theorem~\ref{thm:main}. For type $(C_3,6)$, the noninvertible
simple has dimension $d=3+2\sqrt3$, so $D=3+d^2=24+12\sqrt3$ and
$\mu=6$. Every other row of Table~\ref{tab:models} has $D<44$ and
$\mu\leq3$. The radical comparisons are immediate. For $\delta_7$ and $\delta_E$,
the defining polynomials become polynomials with strictly positive
coefficients after substituting $x=44+u$; neither has a root at least
$44$. Finally $44<24+12\sqrt3$. The cited construction of the
near-group category realizes both equalities.
\end{proof}

%% file: source/generated_filters.tex
% Generated by supplementary/check_manuscript.py; do not edit.
\begin{table}[ht]
\centering\small
\begin{tabular}{@{}lrr@{}}
\toprule
Stage & Excluded & Remaining \\
\midrule
Complete census & --- & 8977 \\
Rank-two subring classification & 1676 & 7301 \\
Formal-codegree $d$-numbers & 6541 & 760 \\
Published rank-four families & 475 & 285 \\
Quadratic twist obstruction & 10 & 275 \\
Spherical-character arithmetic & 210 & 65 \\
Local central induction & 50 & 15 \\
\bottomrule
\end{tabular}
\caption{Successive exact filters. The last two stages cover every
potentially spherical character orbit and every required low-column choice.}
\label{tab:filters}
\end{table}

%% file: source/census_appendix.tex
\section{Completeness of the bounded census}\label{app:census}
This appendix supplies the mathematical coverage argument for the finite
search. Checking that emitted tensors are valid and pairwise nonisomorphic
does not by itself prove that every ring occurs. We therefore give the
complete presentations, the elimination branches, and the exact acceptance
condition. All parameters range over integers in $[0,M]$, where $M=59$;
write $\Lambda=3600$ for the exclusive dimension bound.

\subsection{Complete presentations}
By Lemma~\ref{lem:commutative}, every rank-four fusion ring is
commutative. Duality on its three nonunit basis elements is either the
identity or a transposition. Reciprocity gives the following two
presentations, with $N_0=I$ and our row convention
$(N_u)_{vw}=N_{uv}^{w}$.

In the self-dual case the parameters are $(a,b,c,d,e,f,g,h,i,j)$, and
\[
N_1=\begin{pmatrix}0&1&0&0\\1&a&b&c\\0&b&d&e\\0&c&e&f\end{pmatrix},\quad
N_2=\begin{pmatrix}0&0&1&0\\0&b&d&e\\1&d&g&h\\0&e&h&i\end{pmatrix},\quad
N_3=\begin{pmatrix}0&0&0&1\\0&c&e&f\\0&e&h&i\\1&f&i&j\end{pmatrix}.
\]
Their pairwise commutativity is equivalent to
\begin{align}
 ad-b^2+bg+ch-d^2-e^2+1&=0,\notag\\
 ae-bc+bh+ci-de-ef&=0,\notag\\
 be-cd+dh-eg+ei-fh&=0,\label{eq:sd-assoc}\\
 af+bi-c^2+cj-e^2-f^2+1&=0,\notag\\
 bf-ce+di-eh+ej-fi&=0,\notag\\
 df-e^2+gi-h^2+hj-i^2+1&=0.\notag
\end{align}
In the case with duality $(0)(1)(23)$ the parameters are $(a,b,c,d,e,f)$,
and
\[
 N_1=\begin{pmatrix}0&1&0&0\\1&a&b&b\\0&b&c&d\\0&b&d&c\end{pmatrix},\quad
 N_2=\begin{pmatrix}0&0&1&0\\0&b&c&d\\0&d&e&f\\1&c&e&e\end{pmatrix},\quad
 N_3=N_2^{\mathsf T}.
\]
Their commutativity is equivalent to
\begin{align}
 ac-b^2+2be-c^2-d^2+1&=0,\notag\\
 ad-b^2+be+bf-2cd&=0,\label{eq:ns-assoc}\\
 b(c-d)+d(e-f)&=0,\notag\\
 -c^2+d^2-e^2+f^2-1&=0.\notag
\end{align}
In both cases commutativity of the displayed matrices suffices for
associativity of the proposed multiplication. Indeed, set
$E=N_uN_v-\sum_w N_{uv}^{w}N_w$. Its zeroth row vanishes, and it
commutes with every $N_t$. Its row $t$ is the zeroth row of
$N_tE=EN_t$, hence also vanishes. Thus $E=0$, proving the full
regular multiplication identities. The unit, duality, integrality,
nonnegativity, and reciprocity axioms are already built into the
presentations.

\subsection{The self-dual branches}
Permutations of the three nonunit elements act transitively on the six
mixed coordinates $b,c,d,f,h,i$. Every isomorphism class therefore
has a representative with $b=\min(b,c,d,f,h,i)$. We enumerate all
such representatives before imposing the final isomorphism reduction.

\paragraph{\textbf{Positive minimal mixed coefficient.}}
Suppose $b>0$. Enumerate $b,c,d,e,f$, with $c,d,f\geq b$, and set
\begin{align*}
 H_0&=e(d+f)+bc,&G_0&=b^2+d^2+e^2-1,&J_0&=c^2+e^2+f^2-1,\\
 A_0&=e(b^2-c^2)+bc(f-d),&C_0&=e(bf-ce).
\end{align*}
Also put
\[
 B_0=b^3e-b^2cd+b(d-f)H_0-ebG_0+ecH_0.
\]
The first, second, and fourth equations in \eqref{eq:sd-assoc} give
\[
 h=\frac{H_0-ae-ci}{b},\qquad
 g=\frac{G_0-ad-ch}{b},\qquad
 j=\frac{J_0-af-bi}{c}.
\]
Substitution into the third equation and multiplication by $b^2$ give
\begin{equation}\label{eq:census-linear}
 C_0a+A_0i=-B_0.
\end{equation}
If $A_0\ne0$, put $q=\gcd(A_0,C_0)$. There are no solutions unless
$q\mid B_0$; otherwise $a$ lies in the unique residue class solving
\[
 (C_0/q)a\equiv-B_0/q\pmod{|A_0/q|},
\]
and $i$ is determined by \eqref{eq:census-linear}. A modulus of one
imposes no restriction. If $A_0=0\ne C_0$, determine $a$ and
retain every bounded $i$. If $A_0=C_0=0$, require $B_0=0$ and
retain every bounded pair $(a,i)$. In all cases retain only integral
$g,h,j$, with $a,g,j\in[0,M]$ and $h,i\in[b,M]$, and test all
six original equations.

The implementation discards outer tuples only by necessary interval
conditions. Besides $b,c,d,f\in[b,M]$ and $e\in[0,M]$, these are
\begin{gather*}
 b(b+c)\leq H_0\leq M(e+b+c),\qquad
 bc\leq G_0\leq M(d+b+c),\\
 b^2\leq J_0\leq M(f+b+c),\\
 \min(0,C_0M)+\min(A_0b,A_0M)\leq-B_0
 \leq\max(0,C_0M)+\max(A_0b,A_0M).
\end{gather*}
The first three follow from the displayed equations for $h,g,j$;
the last bounds the left side of \eqref{eq:census-linear} over its
allowed rectangle. The further necessary dimension inequality
\[
 2+2b^2+2c^2+d^2+2e^2+f^2<\Lambda
\]
is the bound on $B_{11}$ after discarding its nonnegative summand $a^2$.
Here $B=\sum_u N_uN_u^{\mathsf T}$ is the Casimir matrix.

\paragraph{\textbf{One zero mixed coefficient.}}
Suppose $b=0<c$. The equations imply
\[
 h=\frac{d^2+e^2-ad-1}{c},\qquad
 i=\frac{e(d+f-a)}{c},\qquad
 j=\frac{c^2+e^2+f^2-af-1}{c},
\]
and $eg=h(d-f)+ei-cd$.
For $e>0$, enumerate $c,d,e,f$ and the progression
$a\equiv d+f\pmod{c/\gcd(c,e)}$. The bounds on $h,i,j$ give
necessary bounds on $a$; exact division determines $g$. Test all
six equations and all parameter bounds.

For $e=0$, one has $i=0$, and the first and third equations imply
$d,h>0$ and $\gcd(c,d)=1$. Indeed $d=0$ would give $ch=-1$,
and $h(d-f)=cd>0$ forces $h>0$; divisibility of $d(d-a)-1$
by $c$ gives coprimality. Enumerate $c,d$ and
$a\equiv d-d^{-1}\pmod c$, with the congruence unrestricted for
$c=1$. Determine $h$, put $f=d-cd/h$, and determine $j$ by the
displayed formula. Retain every bounded $g$, testing exact integrality,
all bounds, and all six equations.

\paragraph{\textbf{Two zero mixed coefficients, with $e>0$.}}
For $b=c=0<e$, the first, second, and fourth equations yield
$a=d+f$ and $df=e^2-1$. Enumerate every bounded factor pair,
including the zero-factor possibilities when $e=1$. For every
bounded $h,i$, determine $g,j$ from
\[
 eg=h(d-f)+ei,\qquad ej=eh+i(f-d).
\]
Retain the integral bounded solutions satisfying all six equations.

\paragraph{\textbf{The remaining zero branch.}}
For $b=c=e=0$, the equations force $a=0$ and $d=f=1$ and leave
\[
 gi+hj=h^2+i^2-2.
\]
For $h>0$, enumerate $h,i,g$ and determine $j$; for $h=0<i$,
determine $g$ and retain every bounded $j$. The case $h=i=0$
is impossible. As before, test all original equations and bounds.

These four branches are exhaustive. In each division step a nonzero
denominator has been explicitly assumed; when a coefficient vanishes,
every remaining bounded solution is retained. Finally keep the
lexicographically least tuple among its permutations preserving the
minimal value of $b$. Each orbit has exactly one such tuple. Thus the
self-dual enumeration has neither an omitted zero case nor a duplicate
based-ring isomorphism class.

\subsection{The branch with a dual pair}
Enumerate $c,d,e$. The last equation in \eqref{eq:ns-assoc} determines
the unique possible nonnegative integer
$f=\sqrt{1+c^2-d^2+e^2}$; test its square and bound exactly.
If $c\ne d$, the third equation determines
$b=-d(e-f)/(c-d)$. If $c=d$, require $d(e-f)=0$ and retain
every bounded $b$. Determine $a$ from the first equation when $c>0$,
from the second when $c=0<d$, and retain every bounded $a$ when
$c=d=0$. Test all four original equations and bounds.
The only unit-preserving relabelling compatible with this duality
exchanges the dual pair. It leaves the six parameters unchanged, so
these tuples already represent distinct isomorphism classes.

\subsection{The dimension test and the coverage conclusion}
The positive dimension vector is an eigenvector of the symmetric
nonnegative Casimir matrix $B$, with eigenvalue $D=\FPdim(R)$.
It follows that $D$ is its largest eigenvalue. Therefore
\begin{equation}\label{eq:census-pd}
 D<\Lambda\quad\Longleftrightarrow\quad
 \Lambda I-B\text{ is positive definite}
 \quad\Longleftrightarrow\quad
 \det(\Lambda I-B)_{[1,k]}>0\ (1\leq k\leq4).
\end{equation}
Here $[1,k]$ denotes the leading principal $k$-by-$k$ submatrix.
The final equivalence is Sylvester's criterion. All determinants are
integers; a zero determinant is rejected, so equality at the boundary
is never admitted. The C++ enumeration uses fraction-free elimination,
and the Python tensor checker recomputes the same quantities with
arbitrary-precision integers.

\begin{proposition}\label{prop:census-coverage}
The preceding procedure with $M=59$ and $\Lambda=3600$ emits exactly
one representative of every rank-four fusion ring with $D<3600$.
\end{proposition}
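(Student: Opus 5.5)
The proof has two halves. Soundness: every emitted tuple is a rank-four fusion ring with $D<3600$. Completeness: every such ring is represented, and exactly once.

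\textbf{Soundness.} Every emitted tuple satisfies all original commutativity equations \eqref{eq:sd-assoc} or \eqref{eq:ns-assoc}, which each branch tests explicitly. The closing argument of the presentations subsection then shows that commutativity implies associativity. Unit, duality, nonnegativity and reciprocity are built into the presentations. The Sylvester test \eqref{eq:census-pd} is an exact equivalence with $D<\Lambda$, so the bound holds as well.

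\textbf{Completeness.} Let $R$ be a rank-four fusion ring with $D<3600$.
\begin{enumerate}
\item By Lemma~\ref{lem:commutative}, $R$ is commutative and $\mu(R)^2\leq D<3600$. Hence every structure constant lies in $[0,59]=[0,M]$.
\item Duality on the three nonunit elements is either trivial or a single transposition. After relabelling, $R$ is given by one of the two displayed presentations with parameters in $[0,M]$.
\item In the self-dual case, permute the nonunit elements so that $b$ is the minimal mixed coefficient. This is possible because the permutations act transitively on the mixed coordinates.
\end{enumerate}

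Next I would check case by case that the four branches $b>0$; $b=0<c$; $b=c=0<e$; $b=c=e=0$ are exhaustive, and that no branch loses a solution. The points to verify are these.
\begin{enumerate}
\item Each division uses a denominator assumed nonzero in that branch.
\item Each derived congruence, such as $a\equiv d+f \pmod{c/\gcd(c,e)}$ or $a\equiv d-d^{-1}\pmod c$, is a consequence of the equations rather than an extra constraint.
\item When a linear coefficient vanishes, as in $A_0=0$ or $C_0=0$ in \eqref{eq:census-linear}, or $c=d$ in the dual-pair case, every bounded value of the free variable is retained.
\item Each pruning inequality on outer tuples is necessary. For example, the bounds on $H_0,G_0,J_0$ come from $h,i\in[b,M]$ and $a,g,j\in[0,M]$, the rectangle bound comes from \eqref{eq:census-linear}, and the $B_{11}$ inequality holds because $B_{11}\leq D<\Lambda$, the largest eigenvalue dominating diagonal entries.
\end{enumerate}
The short derivations in the branches $e=0$ and $b=c=e=0$ (forcing $a=0$, $d=f=1$, $\gcd(c,d)=1$, and so on) must be re-derived from \eqref{eq:sd-assoc}. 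The dual-pair branch is similar, with $f$ determined up to sign by the last equation.

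\textbf{Uniqueness.} In the self-dual case, the lexicographically least tuple among the permutations that preserve minimal $b$ is a canonical form. Any isomorphism of based rings is a permutation of nonunit elements, and it preserves the minimum of the mixed coefficients. So two tuples in the same orbit are both stabilized to the same canonical representative, and each orbit appears exactly once. In the dual-pair case, the only nontrivial relabelling swaps the pair, and it fixes the parameters, so distinct tuples give distinct classes. The two duality types themselves are never isomorphic.

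\textbf{Main obstacle.} I expect the hardest step to be the $b>0$ branch. There I must confirm that eliminating $h,g,j$ and multiplying by $b^2$ yields exactly \eqref{eq:census-linear}, with the stated $A_0,B_0,C_0$, and that the residue-class parametrization of $a$ captures every solution. I would first verify that polynomial identity symbolically, then verify the interval conditions. Finally, the statement's claim is about the implemented procedure, so it ultimately rests on the exact-arithmetic checks of the supplementary program matching this description.
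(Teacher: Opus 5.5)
Your proposal is correct and follows essentially the same route as the paper's proof. For completeness you use $\mu(R)^2\leq D<3600$ to place every ring in the box $[0,59]$ under one of the two presentations, and you check that the elimination branches are exhaustive and that all pruning is necessary. For uniqueness you use the minimal-$b$ lexicographic canonical form, and for soundness the Sylvester criterion, which is exactly equivalent to $D<\Lambda$. You also spell out the checks that the paper's short proof leaves to the preceding description of the procedure: the derivation of \eqref{eq:census-linear}, the necessity of the interval and $B_{11}$ bounds, and the invariance of the minimal mixed coefficient under relabelling.
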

\begin{proof}
Every rank-four fusion ring has one of the two complete presentations.
All solutions of their associativity equations in the parameter box
occur in the enumerated branches, and the isomorphism reduction retains
exactly one representative. The acceptance condition
\eqref{eq:census-pd} is equivalent to the desired strict dimension
bound. Conversely, every ring below that bound has $\mu(R)^2\leq D$
by Lemma~\ref{lem:commutative}, hence $\mu(R)\leq59$, and therefore
lies in the searched box. This proves both directions.
\end{proof}

The executable output has $8883$ self-dual classes and $94$ classes
with a dual pair, for a total of $8977$. Completeness follows from the
proposition, not merely from agreement of these totals. As additional
implementation checks, removal of both dimension-pruning steps yields
$95842$ intermediate classes, of which an independently implemented
strict dimension test retains exactly the same $8977$ parameter tuples.
An exhaustive small-box test examines all $3^{10}+3^6=59778$
assignments at multiplicity at most two and recovers the same $27$
classes as the optimized search. The former experiment shares the
Diophantine elimination with the main enumerator; the latter avoids
that elimination but shares the tensor parametrizations and
canonicalization. Neither is asserted to be a second proof of the
entire enumeration algorithm.

%% file: source/closing.tex
\par\smallskip
The general, not necessarily pivotal, case is the subject of our work
in progress, \emph{Grothendieck rings of general fusion categories of
rank four}.

\subsection*{Acknowledgments}
S\'ebastien Palcoux is supported by the National Natural Science
Foundation of China (NSFC), Grant no.~12471031.
Arnaud Plessis is supported by the Beijing Municipal Natural Science
Foundation (No.~IS25018).
GPT-6 Astra was used as a research assistant. The authors take full
responsibility for the final manuscript.